\documentclass[12pt, reqno]{amsart}
\usepackage{amsmath, amsthm, amscd, amsfonts, amssymb, graphicx, color}
\usepackage{eurosym}

\setcounter{MaxMatrixCols}{10}
%TCIDATA{OutputFilter=LATEX.DLL}
%TCIDATA{Version=5.50.0.2960}
%TCIDATA{Codepage=1251}
%TCIDATA{<META NAME="SaveForMode" CONTENT="1">}
%TCIDATA{BibliographyScheme=Manual}
%TCIDATA{LastRevised=Thursday, January 05, 2023 14:28:55}
%TCIDATA{<META NAME="GraphicsSave" CONTENT="32">}
%TCIDATA{Language=American English}

\textheight 22.5truecm \textwidth 14.5truecm
\setlength{\oddsidemargin}{0.35in}\setlength{\evensidemargin}{0.35in}
\setlength{\topmargin}{-.5cm}
\newtheorem{theorem}{Theorem}[section]
\newtheorem{lemma}[theorem]{Lemma}

\theoremstyle{definition}

\newtheorem{conjecture}[theorem]{Conjecture}

\theoremstyle{remark}
\newtheorem{remark}[theorem]{Remark}
\numberwithin{equation}{section}

\begin{document}
\title[Sharp coefficients bounds for Starlike functions]{Sharp coefficients
bounds for Starlike functions associated with Gregory coefficients}
\author{\textbf{\ Erhan Deniz$^1,$} \textbf{\ Sercan Kaz\i mo\u glu$^1 $} and \textbf{\ H. M. Srivastava$^{2,3,4,5,6}$}
}
\address{$^1 $Department of Mathematics, Faculty of Science and Letters, Kafkas University, Kars-TURKEY  \newline 
$^2$Department of Mathematics and Statistics,
University of Victoria, Victoria, British Columbia V8W 3R4, Canada   \newline
$^3$Department of Medical Research, China Medical University Hospital, China Medical University, Taichung 40402,
Taiwan  \newline 
$^4$Center for Converging Humanities, Kyung Hee University, 26 Kyungheedae-ro, Dongdaemun-gu, Seoul 02447, Republic of Korea  \newline
$^5$Department of Mathematics and Informatics, Azerbaijan University, 71 Jeyhun Hajibeyli Street, AZ1007 Baku, Azerbaijan  \newline 
$^6$Section of Mathematics, International Telematic University Uninettuno, I-00186 Rome, Italy}
\email{edeniz36@gmail.com, srcnkzmglu@gmail.com and harimsri@math.uvic.ca}
\keywords{Univalent functions, Starlike function, Gregory coefficients,
Hankel determinant.\\
2010\textit{\ Mathematics Subject Classification. }Priminary 30C45,
Secondary 30C50, 30C80.}

\begin{abstract}
In this paper we introduced the class $\mathcal{S}_{G}^{\ast }$ of analytic
functions which is related with starlike functions and generating function
of Gregory coefficients. By using bounds on some coefficient functionals for
the family of functions with positive real part, we obtain for functions in
the class $\mathcal{S}_{G}^{\ast }$ several sharp coefficient bounds on the
first six coeffcients and also further sharp bounds on the corresponding
Hankel determinants.
\end{abstract}

\maketitle

\section{Introduction}

Let $\mathcal{A}$ be the class of functions $f$ which are analytic in the
open unit disc $\mathcal{U}=\left\{ z:\left\vert z\right\vert <1\right\} $
and normalized by the conditions $f\left( 0\right) =f^{\prime }\left(
0\right) -1=0.$ Let us denote by $\mathcal{S}$ the subclass of $\mathcal{A}$
containing functions which are univalent in $\mathcal{U}$. An analytic
function $f$ is subordinate to an analytic function $g$ (written as $f\prec
g)$ if there exists an analytic function $w$ with $w\left( 0\right) =0$ and$%
\ \left\vert w\left( z\right) \right\vert <1$ for $z\in \mathcal{U}$ such
that $f\left( z\right) =g\left( w\left( z\right) \right) .$ In particular,
if $g$ is univalent in $\mathcal{U},\ $then $f\left( 0\right) =g\left(
0\right) $ and $f\left( \mathcal{U}\right) \subset g\left( \mathcal{U}%
\right) .$ 

In 1992, Ma and Minda \cite{Ma} gave a unified presentation of various
subclasses of starlike and convex functions by replacing the subordinate
function $(1+z)\diagup (1-z)$ by a more general analytic function $\varphi $
with positive real part and normalized by the conditions $\varphi (0)=1,$ $%
\varphi ^{\prime }(0)>0$ and $\varphi $ maps $\mathcal{U}$ onto univalently
a region starlike with respect to $1$ and symmetric with respect to the real
axis. They introduced the following general class that envelopes several
well-known classes as special cases: $\mathcal{S}^{\ast }[\varphi ]=\left\{
f\in \mathcal{A}:\text{ }zf^{\prime }\left( z\right) \diagup f\left(
z\right) \prec \varphi (z)\right\} .$ In literature, the functions belonging
to this class is called Ma-Minda starlike function. For $-1\leq B<A\leq 1$, $%
\mathcal{S}^{\ast }[\left( 1+Az\right) \diagup \left( 1+Bz\right) ]:=%
\mathcal{S}^{\ast }[A,B]$ are called the Janowski starlike functions,
introduced by Janowski \cite{Ja}. The class $\mathcal{S}^{\ast }(\beta )$ of
starlike functions of order $\beta $ $\left( 0\leq \beta <1\right) $ defined
by taking $\varphi (z)=(1+(1-2\beta )z)\diagup (1-z).$ Note that $\mathcal{S}%
^{\ast }=\mathcal{S}^{\ast }(0)$ is the classical class of starlike
functions. By taking $\varphi (z)=1+2\diagup \pi ^{2}(\log \left( (1+\sqrt{z}%
)\diagup (1-\sqrt{z})\right) ^{2},$ we obtain the class $\mathcal{S}_{p}$ of
parabolic starlike functions, introduced by R\o nning \cite{Ronning}. $\mathcal{S}%
^{\ast }[\beta ,-\beta ]:=\widetilde{\mathcal{S}^{\ast }}(\beta )=\{f\in 
\mathcal{A}:|zf^{\prime }\left( z\right) \diagup f\left( z\right) -1|<\beta
|zf^{\prime }\left( z\right) \diagup f\left( z\right) +1|\}$ has been
studied in \cite{Ali1,Ali2}. 

Recently, the coefficient problem for $\mathcal{S}^{\ast }[\varphi ]$ in the
case of generated functions of some well-known numbers of $\varphi $ studied
by \cite{Cho,Dz,Dz2,Kum,Rav}. For example, the case when $\varphi $ is
defined by 
\begin{equation*}
\varphi (z)=\frac{1+\tau ^{2}z^{2}}{1-\tau z-\tau ^{2}z^{2}}\text{ \ \ }%
(z\in \mathcal{%
%TCIMACRO{\U{2102} }%
%BeginExpansion
\mathbb{C}
%EndExpansion
},\;\tau =(1-\sqrt{5})/2),
\end{equation*}%
which generated function of Fibonacci numbers, coefficient problem studied
by Dziok and co-authors \cite{Dz,Dz2}. Similarly, the function 
\begin{equation*}
\varphi (z)=e^{e^{z}-1}\text{ \ \ }(z\in \mathcal{%
%TCIMACRO{\U{2102} }%
%BeginExpansion
\mathbb{C}
%EndExpansion
})
\end{equation*}%
is generated function of Bell numbers. The coefficient problem for this
function studied by Kumar and co-authors \cite{Kum}. Recently, Mendiratta et al. \cite{Mendiratta} and, Goel and Kumar \cite{Goel} obtained the structural formula,
inclusion relations, coefficient estimates, growth and distortion results,
subordination theorems and various radii constants for the exponential function%
\begin{equation*}
\varphi (z)=e^{z}\text{ \ \ }(z\in \mathcal{%
%TCIMACRO{\U{2102} }%
%BeginExpansion
\mathbb{C}
%EndExpansion
})
\end{equation*}%
and the Sigmoid function%
\begin{equation*}
\varphi (z)=\frac{2}{1+e^{-z}}\text{ \ \ }(z\in \mathcal{%
%TCIMACRO{\U{2102} }%
%BeginExpansion
\mathbb{C}
%EndExpansion
})
\end{equation*}%
respectively. In 2021, Deniz \cite{Deniz} has studied sharp coefficient
problem for the function%
\begin{equation*}
\varphi (z)=e^{z+\frac{\lambda }{2}z^{2}}\text{ \ \ }(z\in \mathcal{%
%TCIMACRO{\U{2102} }%
%BeginExpansion
\mathbb{C}
%EndExpansion
}\text{, }\lambda \geq 1),
\end{equation*}%
which is generated function of generalized telephone numbers.

Motivated by the above-cited works, we consider the function $\varphi $ for
which $\varphi (\mathcal{U})$ is starlike with respect to $1$ and whose
coefficients is the Gregory coefficients. Gregory coefficients are
decreasing rational numbers $1/2,1/12,$ $1/24,19/720,...,$ which are similar in
their role to the Bernoulli numbers and appear in a large number of
problems, especially in those related to the numerical analysis and to the
number theory. They first appeared in the works of the Scottish
mathematician James Gregory in 1671 and were subsequently rediscovered many
times. Among the famous mathematicians who rediscovered them, we find
Laplace, Mascheroni, Fontana, Bessel, Clausen, Hermite, Pearson and Fisher.
Furthermore, Gregory's coefficients can rightly be considered one of the
most frequently rediscovered in mathematics (the last rediscovery dates back
to our century), the reason for which in literature we can find them under
various names (e.g. reciprocal logarithmic numbers, Bernoulli numbers of the
second kind, Cauchy numbers, etc.). For the same
reason, their authorship is often attributed to various mathematicians. In
this paper, authors considered the generating function of the Gregory
coefficients $G_{n}$ (see \cite{Berezin,Phillips}) as follows:%
\begin{equation*}
\frac{z}{\ln \left( 1+z\right) }=\sum\limits_{n=0}^{\infty
}G_{n}z^{n}\;\;\;(\left\vert z\right\vert <1).
\end{equation*}

Clearly, $G_{n}$ for some values of $n$ as $G_{0}=1,$ $G_{1}=\frac{1}{2},$ $%
G_{2}=-\frac{1}{12},$ $G_{3}=\frac{1}{24},$ $G_{4}=-\frac{19}{720},$ $G_{5}=%
\frac{3}{160},$ and $G_{6}=-\frac{863}{60480}.$ We now consider the function 
$\Psi (z):=\frac{z}{\ln \left( 1+z\right) }$ with its domain of definition
as the open unit disk $\mathcal{U}$. In connection with this function, we
define the class $$\mathcal{S}_{G}^{\ast }:=\left\{ f:\;f\in \mathcal{S}\text{
and }zf^{\prime }(z)/f(z)\prec \Psi (z)\right\} .$$ 
Graphic of $\Psi (%
\mathcal{U})$ as follows: 
\begin{figure}[h]
\begin{tabular}{c}
\includegraphics[scale=0.8]{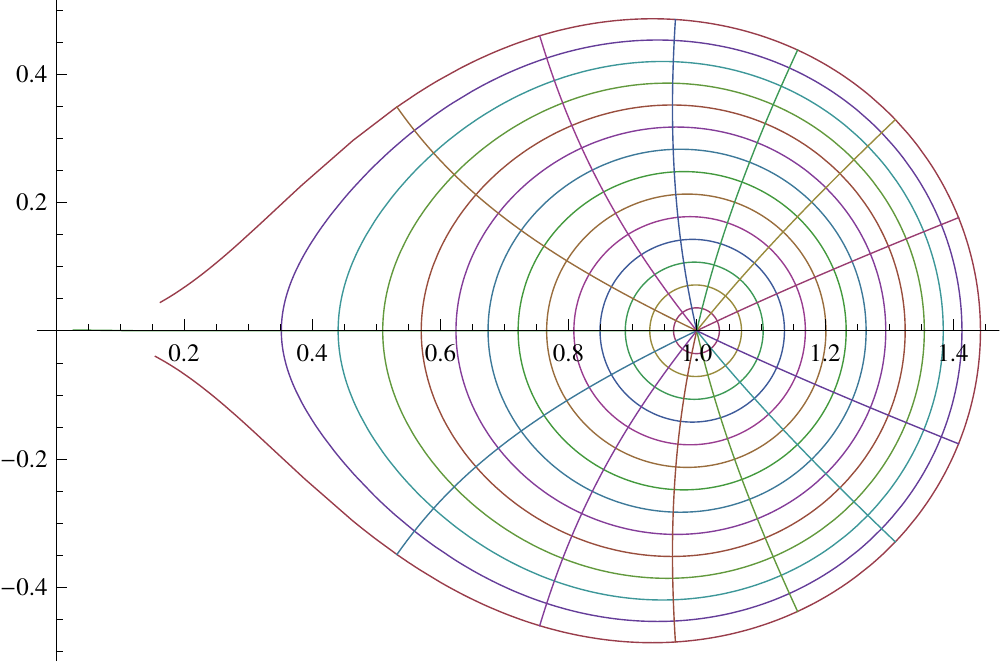}%
\end{tabular}%
\end{figure}

\section{Preliminary Results}

The following lemmas is needed for the main results.

\begin{lemma}
\label{lmma1}\cite{Lib} If $p(z)=1+p_{1}z+p_{2}z^{2}+p_{3}z^{3}+\cdots \in 
\mathcal{P}$ $(p_{1}\geq 0),$ then%
\begin{eqnarray}
2p_{2} &=&p_{1}^{2}+x(4-p_{1}^{2})  \label{eq21} \\
4p_{3}
&=&p_{1}^{3}+2(4-p_{1}^{2})p_{1}x-p_{1}(4-p_{1}^{2})x^{2}+2(4-p_{1}^{2})%
\left( 1-\left\vert x\right\vert ^{2}\right) y,  \label{eq22}
\end{eqnarray}%
for some $x,y\in 
%TCIMACRO{\U{2102} }%
%BeginExpansion
\mathbb{C}
%EndExpansion
$ with $\left\vert x\right\vert \leq 1 \text{ and }\left\vert y\right\vert
\leq 1.$
\end{lemma}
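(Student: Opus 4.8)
The plan is to pass from $\mathcal{P}$ to the class of Schwarz functions, where the coefficient constraints are transparent, and then to read off the two formulas. First I would write $p(z) = (1+\omega(z))/(1-\omega(z))$, where $\omega(z) = c_1 z + c_2 z^2 + c_3 z^3 + \cdots$ is the Schwarz function associated with $p \in \mathcal{P}$ (analytic, $\omega(0)=0$, $|\omega(z)|<1$ on $\mathcal{U}$). Expanding $p-1 = 2(\omega + \omega^2 + \omega^3 + \cdots)$ and comparing coefficients gives $p_1 = 2c_1$, $p_2 = 2c_2 + 2c_1^2$, and $p_3 = 2c_3 + 4c_1 c_2 + 2c_1^3$. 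The normalization $p_1 \geq 0$ forces $c_1 = p_1/2$ to be real and to lie in $[0,1]$.

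The heart of the argument is to establish the standard parametrization of the first Schwarz coefficients, namely $c_2 = (1-c_1^2)x$ and $c_3 = (1-c_1^2)[-c_1 x^2 + (1-|x|^2)y]$ for some $x,y$ with $|x|\le 1$ and $|y|\le 1$. Granting this, a direct substitution (using $p_1^2 = 4c_1^2$ and $4 - p_1^2 = 4(1-c_1^2)$) turns $2p_2 = 4c_2 + p_1^2$ into (\ref{eq21}) and turns $4p_3 = 8c_3 + 16 c_1 c_2 + p_1^3$ into (\ref{eq22}); these substitutions are routine once the parametrization is in hand.

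To prove the parametrization I would run the Schur algorithm on $\psi(z) = \omega(z)/z = c_1 + c_2 z + c_3 z^2 + \cdots$, which maps $\mathcal{U}$ into $\overline{\mathcal{U}}$. Assuming $|c_1|<1$, the transform $\psi_1(z) = \dfrac{\psi(z) - c_1}{z\,(1 - c_1 \psi(z))}$ is again a self-map of the disc; expanding it shows $\psi_1(0) = c_2/(1-c_1^2)$, whence $|c_2| \leq 1 - c_1^2$ and $c_2 = (1-c_1^2)x$ with $|x|\le 1$. Reading off the linear coefficient of $\psi_1$ and applying the elementary estimate $|g_1| \le 1 - |g_0|^2$ for the Taylor coefficients of a self-map $g(z) = g_0 + g_1 z + \cdots$ of the disc (equivalently, performing one further Schur step) then yields $c_3 = (1-c_1^2)[-c_1 x^2 + (1-|x|^2)y]$.

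The main obstacle I anticipate is the bookkeeping in the Schur step for $c_3$: one must expand $\psi_1$ to its linear term, substitute $c_2 = (1-c_1^2)x$, and invoke the second-coefficient bound in precisely the right normalized form so that the product $c_1 x^2$ emerges with the correct sign. A secondary point is the degenerate case $|c_1| = 1$ (that is, $p_1 = 2$), where the transform $\psi_1$ is undefined; here $\psi(0)=1$ forces $\psi \equiv 1$ by the maximum principle, so $\omega(z) \equiv z$ and $c_2 = c_3 = 0$, while $4 - p_1^2 = 0$, and hence both formulas hold trivially for arbitrary $x,y$.
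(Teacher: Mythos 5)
Your proof is correct. Note, however, that the paper itself offers no proof of this lemma: it is quoted verbatim from Libera and Zlotkiewicz \cite{Lib}, so there is no internal argument to compare against; what you have produced is a self-contained derivation of the cited result. Your route — transferring to the Schwarz function $\omega$ via $p=(1+\omega)/(1-\omega)$, running the Schur algorithm to get $c_2=(1-c_1^2)x$ and $c_3=(1-c_1^2)\left[-c_1x^2+(1-|x|^2)y\right]$, and substituting back — is sound: the coefficient identities $p_1=2c_1$, $p_2=2c_2+2c_1^2$, $p_3=2c_3+4c_1c_2+2c_1^3$ are right, the Schur step does produce $g_1=\left(c_3+c_1(1-c_1^2)x^2\right)/(1-c_1^2)$ so the sign of the $c_1x^2$ term comes out as claimed, and your treatment of the degenerate case $p_1=2$ (where $\psi\equiv 1$ by the maximum principle and $4-p_1^2=0$ renders both formulas vacuous) is exactly the needed patch; the same maximum-principle remark also disposes of the case $|x|=1$, where $\psi_1$ is a unimodular constant and $y$ is arbitrary. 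For comparison, the original argument in \cite{Lib} obtains these representations from the Carath\'eodory--Toeplitz characterization of $\mathcal{P}$, i.e.\ nonnegativity of the Toeplitz determinants built from $p_1,p_2,p_3$, which expresses the admissible region for $(p_2,p_3)$ directly; your Schur-algorithm version is more elementary and inductive in flavor, at the cost of the bookkeeping you flagged, and it makes transparent why $x$ and $y$ range over the full closed disc (each is the value of a genuine self-map at the origin), which is what makes the parametrization — and hence the extremal analysis built on it elsewhere in the paper — sharp. One cosmetic remark: in general the Schur transform should read $\psi_1(z)=\left(\psi(z)-c_1\right)/\left(z\left(1-\overline{c_1}\,\psi(z)\right)\right)$; your version without the conjugate is legitimate here only because $p_1\geq 0$ forces $c_1=p_1/2$ to be real and nonnegative, a point worth stating explicitly.
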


\begin{lemma}
\label{lmma2} If $p(z)=1+p_{1}z+p_{2}z^{2}+p_{3}z^{3}+\cdots \in \mathcal{P}$
$(p_{1}\geq 0),$ then 
\begin{equation}  \label{eq24}
\left|p_n\right|\leq 2~~ \left(n\geq 1\right) ,
\end{equation}
and if $Q\in \left[0,1\right]$ and $Q\left(2Q-1\right)\leq R\leq Q,$ then 
\begin{equation}  \label{eq25}
\left|p_3-2Qp_1p_2+Rp_1^3\right|\leq 2.
\end{equation}
Also 
\begin{eqnarray}
\left|p_{n+k}-\mu p_np_k\right|&\leq&
2\max\left\{1,\left|2\mu-1\right|\right\}  \notag \\
&=&2\left\{ 
\begin{array}{ll}
1, & \text{ for } 0\leq \mu \leq 1, \\ 
\left|2\mu-1\right|, & \text{ otherwise }%
\end{array}
\right. .  \label{eq26}
\end{eqnarray}
The inequalities (\ref{eq24}), (\ref{eq25}) and (\ref{eq26}) are taken from 
\cite{Caratheodory,Lib} and \cite{Pom44}, respectively.
\end{lemma}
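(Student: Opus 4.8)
The plan is to treat all three estimates as classical facts about the Carath\'eodory class $\mathcal{P}$ and to build the argument on the Herglotz representation
\[
p(z)=\int_{\left\vert \zeta \right\vert =1}\frac{1+\zeta z}{1-\zeta z}\,d\mu (\zeta ),
\]
where $\mu$ is a probability measure on the unit circle. Expanding the kernel gives $p_{n}=2\int \zeta ^{n}\,d\mu (\zeta )$, whence $\left\vert p_{n}\right\vert \leq 2\int \left\vert \zeta \right\vert ^{n}\,d\mu =2$; this is precisely (\ref{eq24}) and is Carath\'eodory's lemma. Since $p(z)\mapsto p(e^{i\theta}z)$ multiplies each $p_{n}$ by $e^{in\theta}$ and leaves the moduli of the functionals in (\ref{eq25}) and (\ref{eq26}) invariant, I may throughout normalize $p_{1}=c\in [0,2]$, which is the setting of Lemma \ref{lmma1}.

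For (\ref{eq25}) I would avoid the integral representation and instead substitute the Libera and Z\l{}otkiewicz formulas of Lemma \ref{lmma1}, namely (\ref{eq21}) for $2p_{2}$ and (\ref{eq22}) for $4p_{3}$, into the functional $p_{3}-2Qp_{1}p_{2}+Rp_{1}^{3}$. After collecting terms this becomes a polynomial in $c\in [0,2]$ and the auxiliary parameters $x,y$ with $\left\vert x\right\vert \leq 1$ and $\left\vert y\right\vert \leq 1$, in which the coefficient of $c^{3}$ is $\tfrac14+R-Q$, the linear $x$--term carries the factor $\tfrac12-Q$, and $y$ enters only through the free multiple $\tfrac12(4-c^{2})(1-\left\vert x\right\vert ^{2})$. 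The triangle inequality, maximization over $y$ (take $\left\vert y\right\vert =1$) and then over $t=\left\vert x\right\vert \in [0,1]$ reduce the whole problem to a two--variable real maximum on a compact region. The hypotheses $Q\in [0,1]$ and $Q(2Q-1)\leq R\leq Q$ are precisely what is needed to keep $\tfrac14+R-Q$ inside $[-\tfrac14,\tfrac14]$ and to force that maximum to occur at the endpoint $c=2$ with value $2$; this bookkeeping is the only computational part and is elementary.

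The estimate (\ref{eq26}) is Pommerenke's inequality and is the genuine obstacle, since $n$ and $k$ are arbitrary and the two--parameter description of Lemma \ref{lmma1} no longer applies. Here I would follow Pommerenke's original route, using the positive semidefiniteness of the Toeplitz--Carath\'eodory forms attached to the moment sequence $(p_{n})$ and treating separately the regimes $0\leq \mu \leq 1$ and $\mu \notin [0,1]$. In the first regime a direct estimate yields the bound $2$, whereas in the second the extremal configuration changes and the best constant becomes $2\left\vert 2\mu -1\right\vert$; the crucial point is that this is \emph{not} obtained from the triangle inequality alone but exploits the cancellation between $p_{n+k}$ and $\mu p_{n}p_{k}$, since a crude bound would only give $2+4\mu$. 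In both regimes the extremal function is a rotation of $(1+z)/(1-z)$.

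Finally I would record sharpness uniformly: each of the three bounds is attained by the map $(1+z)/(1-z)$ and its rotations, which are exactly the extreme points of the Herglotz representation corresponding to a point mass $\mu$. This confirms that (\ref{eq24})--(\ref{eq26}) are sharp and matches the attribution to Carath\'eodory, to Libera and Z\l{}otkiewicz, and to Pommerenke, respectively.
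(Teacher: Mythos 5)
The paper never proves Lemma \ref{lmma2}: it is stated as a quotation from the literature (Carath\'eodory for \eqref{eq24}, Libera--Zlotkiewicz for \eqref{eq25}, Pommerenke for \eqref{eq26}), so your proposal has to stand on its own. Your Herglotz argument for \eqref{eq24} is correct and standard, and for \eqref{eq26} you correctly identify both the route (Toeplitz positivity, case split at $\mu\in[0,1]$) and the essential point that plain triangle-inequality estimates are too lossy. The genuine gap is in your plan for \eqref{eq25}. Substituting \eqref{eq21}--\eqref{eq22} gives
\[
p_3-2Qp_1p_2+Rp_1^3=\tfrac14\Bigl[(1-4Q+4R)c^3+2(1-2Q)c(4-c^2)x-c(4-c^2)x^2+2(4-c^2)\bigl(1-\left\vert x\right\vert^2\bigr)y\Bigr],
\]
with $c=p_1\in[0,2]$, and you propose to apply the triangle inequality term by term, take $\left\vert y\right\vert=1$, and maximize over $t=\left\vert x\right\vert\in[0,1]$ and $c$. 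That majorant is \emph{not} bounded by $2$ on the full admissible $(Q,R)$ range: take $(Q,R)=(1,1)$, which is admissible since $Q(2Q-1)=1=Q$. Then $\left\vert 1-4Q+4R\right\vert=1$ and $\left\vert 1-2Q\right\vert=1$, and at $t=1$ your bound becomes $\tfrac14\bigl[c^3+3c(4-c^2)\bigr]=3c-\tfrac12 c^3$, whose maximum on $[0,2]$ is $2\sqrt{2}\approx 2.83$ at $c=\sqrt{2}$. Your observation that the hypotheses force $\tfrac14+R-Q\in[-\tfrac14,\tfrac14]$ is true but insufficient. The loss occurs exactly where you warned it would in \eqref{eq26}: once you replace $x$ and $x^2$ by $t$ and $t^2$ you discard the phase coupling between them. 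Indeed, at $c=\sqrt 2$, $(Q,R)=(1,1)$, the functional equals $\tfrac14\left\vert 2\sqrt2-4\sqrt2\,x+2\sqrt2\,x^2\right\vert$ on $\left\vert x\right\vert=1$ (note the $x^2$ term enters with a sign opposite to what the crude bound assumes when $x=\pm1$), and its maximum over $\left\vert x\right\vert =1$ is exactly $2$, attained at $x=i$. A correct proof must keep the quadratic polynomial in $x$ intact and maximize $\left\vert a+bx+cx^2\right\vert+1-\left\vert x\right\vert^2$ over the closed disk --- precisely the device of Lemma \ref{l3} (Ohno--Sugawa), which this paper itself invokes for the analogous functionals in Theorems \ref{t2} and \ref{InvT1} --- or else follow the sign-splitting case analysis of the original Libera--Zlotkiewicz/Ali argument.

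A secondary inaccuracy: your closing sharpness remark is wrong as stated. Rotations of $(1+z)/(1-z)$ do attain $\left\vert p_n\right\vert=2$ and the case $\mu\notin[0,1]$ of \eqref{eq26}, but for \eqref{eq25} the half-plane map gives $\left\vert 2-8Q+8R\right\vert$, which is $0$ at the admissible point $(Q,R)=(\tfrac12,\tfrac14)$; the extremals for \eqref{eq25}, and for \eqref{eq26} with $0<\mu<1$, are instead functions of the form $p(z)=(1+z^m)/(1-z^m)$ (e.g.\ $m=3$ gives $p_1=p_2=0$, $\left\vert p_3\right\vert=2$). Since the lemma asserts only the inequalities, this does not affect the statement, but the remark should be corrected along with the main gap above.
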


\begin{lemma}[see \protect\cite{Rav}]
\label{lmma3} Let $\tau,~\psi,~\rho$ and $\varsigma$ satify the inequalities 
$0<\tau <1,~0<\varsigma<1$ and 
\begin{equation}  \label{eq27}
\begin{split}
 8\varsigma \left(1-\varsigma\right)\left[\left(\tau\psi-2\rho%
\right)^2+\left(\tau\left(\varsigma+\tau\right)-\psi \right)^2 \right]+\tau
\left(1-\tau\right)\left(\psi-2\varsigma\tau\right)^2 \\  \leq
4\varsigma\tau^2\left(1-\tau\right)^2\left(1-\varsigma\right).
\end{split}
\end{equation}
If $p(z)=1+p_{1}z+p_{2}z^{2}+p_{3}z^{3}+\cdots \in \mathcal{P}$ 
$(p_{1}\geq 0),$ then 
\begin{equation}  \label{eq28}
\left|\rho p_1^4+\varsigma p_2^2+2\tau p_1p_3-\frac{3}{2}\psi
p_1^2p_2-p_4\right|\leq 2.
\end{equation}
\end{lemma}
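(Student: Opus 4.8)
The inequality \eqref{eq28} is a coefficient estimate in the Carath\'eodory class $\mathcal{P}$, so the natural strategy is to parametrise $p_2,p_3,p_4$ in terms of $p_1$ and auxiliary disc parameters, substitute into the functional, and reduce \eqref{eq28} to a bounded extremal problem. Lemma \ref{lmma1} already records \eqref{eq21} and \eqref{eq22} for $p_2$ and $p_3$; to these I would adjoin the standard Libera--Z\l otkiewicz companion formula for the fourth coefficient,
\begin{equation*}
8p_4=p_1^4+(4-p_1^2)x\left[p_1^2(x^2-3x+3)+4x\right]-4(4-p_1^2)(1-|x|^2)\left[p_1(x-1)y+\bar{x}y^2-(1-|y|^2)\xi\right],
\end{equation*}
valid for some $\xi$ with $|\xi|\le1$ and the same $x,y$ as in Lemma \ref{lmma1}. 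Throughout I set $p_1=c\in[0,2]$ and $s=|x|\in[0,1]$, which is legitimate since $p_1\ge0$ and $|p_1|\le2$.

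First I would substitute these three formulas into $\Xi:=\rho p_1^4+\varsigma p_2^2+2\tau p_1p_3-\tfrac{3}{2}\psi p_1^2p_2-p_4$. The decisive structural point is that the third parameter $\xi$ enters only through $-p_4$, and only linearly, the modulus of its coefficient being $\tfrac12(4-c^2)(1-|x|^2)(1-|y|^2)$; hence by the triangle inequality and $|\xi|\le1$ one may discard $\xi$ and replace its contribution by that modulus. What remains is a term independent of $y$, a term linear in $y$ (coming from $2\tau p_1p_3$ and from the $p_1(x-1)y$ part of $p_4$), and a term in $y^2$ (from $\bar xy^2$). Choosing the phases of $x$ and $y$ to align all contributions, the majorant becomes a real function of $c$, $s=|x|$ and $v=|y|$, quadratic in $v$ once the $\xi$-term $\tfrac12(4-c^2)(1-s^2)(1-v^2)$ is included. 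Maximising this quadratic over $v\in[0,1]$ is elementary and places the maximum at $v=0$ or $v=1$, leaving two candidate two-variable majorants $\Phi_0(c,s)$ and $\Phi_1(c,s)$ on the rectangle $[0,2]\times[0,1]$.

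It then remains to prove $\Phi_j(c,s)\le2$. The easy edges I would treat by direct calculus: for instance at $c=0$ the formulas collapse and the $v=0$ majorant is $\Phi_0(0,s)=2+2s^2\!\left(|2\varsigma-1|-1\right)\le2$, which already uses only $0<\varsigma<1$; the edges $s=0$, $s=1$ and $c=2$ reduce $\Phi_j$ to one-variable polynomials. The substantive case is an extremum interior to both variables, and this is exactly where hypothesis \eqref{eq27} is consumed: after writing $\Phi_j(c,s)-2$ as a polynomial, locating the stationary point, and simplifying, the condition \eqref{eq27} should emerge as precisely the inequality asserting that the stationary value does not exceed $2$. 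I expect it to take the shape of a negative semidefiniteness condition for a quadratic form whose coefficients are built from the combinations $\tau\psi-2\rho$, $\tau(\varsigma+\tau)-\psi$ and $\psi-2\varsigma\tau$ appearing in \eqref{eq27}, with $8\varsigma(1-\varsigma)$, $\tau(1-\tau)$ and $4\varsigma\tau^2(1-\tau)^2(1-\varsigma)$ as the governing weights.

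The main obstacle is not a single idea but the organisation of the optimisation. The genuinely delicate points are: justifying that the chosen phases of $x$ and $y$ really maximise the modulus (the mixture of $x$, $x^2$ and $\bar x$ means this needs a short argument rather than being automatic); carrying out the $v$-maximisation cleanly enough that $\Phi_0$ and $\Phi_1$ are tractable; and, above all, verifying that the interior stationary value controlled by \eqref{eq27} dominates every boundary configuration. It is in matching the simplified interior bound to the exact algebraic form of \eqref{eq27} that almost all of the work, and the main risk of error, resides.
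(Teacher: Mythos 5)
You cannot be compared against an in-paper argument here, because the paper gives none: Lemma \ref{lmma3} is imported verbatim, with citation, from Ravichandran and Verma \cite{Rav}, so your proposal has to stand entirely on its own. Your general plan --- parametrize $p_2,p_3$ via Lemma \ref{lmma1}, adjoin the three-parameter representation of $p_4$ (which, incidentally, is due to Kwon, Lecko and Sim, not Libera--Z\l otkiewicz, though it is in the same spirit), and reduce \eqref{eq28} to a finite-dimensional maximization --- is a legitimate modern route that has been used for quartic coefficient functionals. But as written the proposal has one concrete false step and one structural gap, and the structural gap is exactly the content of the lemma.

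The false step is the $v$-maximization. After you discard $\xi$ by the triangle inequality, the $v^{2}$-coefficient of your majorant is $\tfrac12(4-c^2)(1-s^2)s$ from the $\bar{x}y^{2}$ term \emph{minus} $\tfrac12(4-c^2)(1-s^2)$ from the $\xi$-term, i.e.\ $\tfrac12(4-c^2)(1-s^2)(s-1)\le 0$: the quadratic in $v$ is \emph{concave}, and since the linear terms coming from $2\tau p_1p_3$ and from the $p_1(x-1)y$ part of $p_4$ have positive modulus for $c>0$, its maximum sits generically at an interior vertex $v^{\ast}\in(0,1)$, not at $v\in\{0,1\}$ as you assert (endpoint maxima would require convexity). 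So the reduction to two candidate majorants $\Phi_0,\Phi_1$ fails and the case analysis built on it collapses; it is repairable (substitute the vertex value), but then the surviving majorant is more complicated than you allow for. The structural gap is that the decisive implication --- that hypothesis \eqref{eq27} is precisely what controls the interior stationary value --- is stated as an expectation (``should emerge''), not proved, and there is a real reason to doubt that your particular majorant delivers it: the $y^{0}$ block mixes $x$, $x^{2}$ and $\bar{x}$ with coefficients built from $\rho$ and $\psi$, which \eqref{eq27} leaves free in sign, so the phases of $x$ and $y$ cannot in general be aligned simultaneously and your termwise bound is strictly lossy. Since \eqref{eq28} is sharp (equality for $p(z)=(1+z^{4})/(1-z^{4})$, i.e.\ $c=s=v=0$, $\xi=1$) and \eqref{eq27} is calibrated exactly to that margin, a lossy majorant may exceed $2$ for admissible $(\tau,\psi,\rho,\varsigma)$ on or near the boundary of \eqref{eq27} even though the true functional does not; you give no argument ruling this out. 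Until you either verify that your (corrected) majorant stays below $2$ under precisely \eqref{eq27}, or follow the grouping actually used in \cite{Rav}, the lemma is not established.
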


\begin{lemma}
\label{l3}\cite{Oh} Let $\overline{\mathcal{U}}=\left\{ z:\left\vert
z\right\vert \leq 1\right\} .$ Also, for any real numbers $a,$ $b$ and $c,$
let the quantity $Y(a,b,c)=\max_{z\in \overline{\mathcal{U}}}\left\{
\left\vert a+bz+cz^{2}\right\vert +1-\left\vert z\right\vert ^{2}\right\} .$
If $ac\geq 0,$ then%
\begin{equation*}
Y(a,b,c)=\left\{ 
\begin{array}{cc}
\left\vert a\right\vert +\left\vert b\right\vert +\left\vert c\right\vert & 
\left\vert b\right\vert \geq 2(1-\left\vert c\right\vert ) \\ 
1+\left\vert a\right\vert +\frac{b^{2}}{4(1-\left\vert c\right\vert )} & 
\left\vert b\right\vert <2(1-\left\vert c\right\vert )%
\end{array}%
\right. .
\end{equation*}%
Furthermore, if $ac<0,$ then%
\begin{equation*}
Y(a,b,c)=\left\{ 
\begin{array}{cc}
1-\left\vert a\right\vert +\frac{b^{2}}{4(1-\left\vert c\right\vert )} & 
\left( -4ac(c^{-2}-1)\leq b^{2};\;\left\vert b\right\vert <2(1-\left\vert
c\right\vert )\right) \\ 
1+\left\vert a\right\vert +\frac{b^{2}}{4(1+\left\vert c\right\vert )} & 
b^{2}<\min \left\{ 4(1+\left\vert c\right\vert )^{2},-4ac(c^{-2}-1)\right\}
\\ 
R(a,b,c) & \left( \text{otherwise}\right)%
\end{array}%
\right. ,
\end{equation*}%
where%
\begin{equation*}
R(a,b,c)=\left\{ 
\begin{array}{cc}
\left\vert a\right\vert +\left\vert b\right\vert -\left\vert c\right\vert & 
\left( \left\vert c\right\vert \left( \left\vert b\right\vert +4\left\vert
a\right\vert \right) \leq \left\vert ab\right\vert )\right) \\ 
-\left\vert a\right\vert +\left\vert b\right\vert +\left\vert c\right\vert & 
\left( \left\vert ab\right\vert \leq \left\vert c\right\vert \left(
\left\vert b\right\vert -4\left\vert a\right\vert \right) \right) \\ 
\left( \left\vert a\right\vert +\left\vert c\right\vert \right) \sqrt{1-%
\frac{b^{2}}{4ac}} & \left( \text{otherwise}\right)%
\end{array}%
\right. .
\end{equation*}
\end{lemma}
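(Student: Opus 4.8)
The plan is to compute the extremum directly by separating modulus and argument. Writing $z=re^{i\theta}$ with $r\in[0,1]$ and using that $a,b,c$ are real (so that $|a+b\overline{z}+c\overline{z}^{2}|=|a+bz+cz^{2}|$, which lets me restrict to $\theta\in[0,\pi]$), I would first record that, with $t=\cos\theta\in[-1,1]$,
\begin{equation*}
|a+bz+cz^{2}|^{2}=4acr^{2}\,t^{2}+2br(a+cr^{2})\,t+\bigl[(a-cr^{2})^{2}+b^{2}r^{2}\bigr]=:q_{r}(t).
\end{equation*}
Consequently $Y(a,b,c)=\max_{r\in[0,1]}\bigl[\sqrt{\max_{t\in[-1,1]}q_{r}(t)}+1-r^{2}\bigr]$, so the whole problem collapses to maximizing a quadratic in $t$ on $[-1,1]$ and then performing a one-variable optimization in $r$. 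Throughout I may also assume $b\ge 0$ after the reflection $z\mapsto -z$.

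For the case $ac\ge 0$ the leading coefficient $4acr^{2}$ is nonnegative, so $q_{r}$ is convex and attains its maximum on $[-1,1]$ at an endpoint. Since $q_{r}(\pm 1)=(a\pm br+cr^{2})^{2}$ and $|a+cr^{2}|=|a|+|c|r^{2}$ when $ac\ge 0$, one gets $\max_{\theta}|a+bz+cz^{2}|=|a|+|b|r+|c|r^{2}$. It then remains to maximize $H(r):=1+|a|+|b|\,r-(1-|c|)r^{2}$ over $[0,1]$. If $1-|c|\le 0$ or the free critical point $r^{\ast}=|b|/\bigl(2(1-|c|)\bigr)$ satisfies $r^{\ast}\ge 1$, equivalently $|b|\ge 2(1-|c|)$, the maximum is at $r=1$ and equals $|a|+|b|+|c|$; otherwise it is at $r^{\ast}$ and equals $1+|a|+b^{2}/\bigl(4(1-|c|)\bigr)$. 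This reproduces the two stated branches.

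For the case $ac<0$ the parabola $q_{r}$ is concave, so its maximum on $[-1,1]$ sits at the vertex $t_{0}(r)=-\,b(a+cr^{2})/(4acr)$ when $|t_{0}(r)|\le 1$ and at the nearer endpoint otherwise. A short computation gives on the vertex-interior range
\begin{equation*}
q_{r}\bigl(t_{0}(r)\bigr)=(a-cr^{2})^{2}\Bigl(1-\tfrac{b^{2}}{4ac}\Bigr),\qquad\text{so}\qquad M(r)=\bigl(|a|+|c|r^{2}\bigr)\sqrt{1-\tfrac{b^{2}}{4ac}},
\end{equation*}
using $|a-cr^{2}|=|a|+|c|r^{2}$ since $a,c$ have opposite signs; on the endpoint ranges $M(r)=|a\pm br+cr^{2}|$. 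The condition $|t_{0}(r)|\le 1$ is itself an inequality in $r$, namely $|b|\,|a+cr^{2}|\le 4|ac|r$, so $F(r):=M(r)+1-r^{2}$ is genuinely piecewise. I would then maximize each piece separately and compare. The interior piece $1+|a|\kappa+(|c|\kappa-1)r^{2}$ with $\kappa=\sqrt{1-b^{2}/(4ac)}$ is monotone in $r^{2}$ and is thus extremal at a breakpoint or at $r=1$ (where it equals $(|a|+|c|)\kappa$, the third branch of $R$), while the endpoint pieces are handled exactly as in the previous paragraph and yield $1-|a|+b^{2}/\bigl(4(1-|c|)\bigr)$ and $1+|a|+b^{2}/\bigl(4(1+|c|)\bigr)$. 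Collating which piece wins produces the three top-level branches together with their threshold conditions.

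The main obstacle is precisely this bookkeeping in the case $ac<0$: because the range of $r$ on which the $\theta$-maximizer is interior depends on $r$ itself, the one-variable function $F(r)$ is piecewise with $r$-dependent breakpoints, and these must be compared against the endpoints $r=0,1$ and against the interior critical points of each piece. Matching the outcomes to the exact threshold quantity $-4ac(c^{-2}-1)$, which arises from the discriminant-type comparison of $F$ across the breakpoint, and sorting the resulting maxima into the three sub-branches of $R(a,b,c)$ according to the sign pattern of $|c|(|b|+4|a|)-|ab|$, is where all the delicacy lies; the convex case $ac\ge 0$ and the reduction step are by comparison routine.
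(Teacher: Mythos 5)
You should know at the outset that the paper contains no proof of this lemma: it is quoted verbatim from Ohno and Sugawa \cite{Oh}, so the benchmark is their proof, which in fact opens with exactly your reduction. Your preparatory work is correct: with $z=re^{i\theta}$, $t=\cos\theta$, the identity $|a+bz+cz^{2}|^{2}=4acr^{2}t^{2}+2br(a+cr^{2})t+(a-cr^{2})^{2}+b^{2}r^{2}$ checks out (it is the identity $|A+Be^{i\theta}+Ce^{2i\theta}|^{2}=4ACt^{2}+2B(A+C)t+(A-C)^{2}+B^{2}$ with $A=a$, $B=br$, $C=cr^{2}$); the convex case $ac\geq 0$ is then handled completely and correctly, since the maximum in $t$ sits at an endpoint, $|a+cr^{2}|=|a|+|c|r^{2}$, and the quadratic $1+|a|+|b|r-(1-|c|)r^{2}$ on $[0,1]$ splits at $|b|\geq 2(1-|c|)$ precisely as the statement requires; and your vertex value $q_{r}(t_{0})=(a-cr^{2})^{2}\bigl(1-\tfrac{b^{2}}{4ac}\bigr)$ is also right.

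The genuine gap is that for $ac<0$ you stop at naming candidate maxima and assert that ``collating which piece wins produces the three top-level branches together with their threshold conditions.'' That collation \emph{is} the lemma: all of its content in the hard case lies in the exact threshold inequalities, and none of them is derived in your text. Concretely, you would still need to (i) solve $|t_{0}(r)|\leq 1$, i.e. $|b|\,\bigl||a|-|c|r^{2}\bigr|\leq 4|ac|r$, whose solution set in $[0,1]$ is an interval whose endpoints are roots of two quadratics and whose structure depends on whether $|a|\lessgtr |c|$; (ii) verify whether the unconstrained vertices $|b|/(2(1\mp|c|))$ of the two endpoint pieces actually lie inside the sub-ranges $|c|r^{2}\geq|a|$ resp. $|c|r^{2}\leq|a|$, intersected with the endpoint regime and with $[0,1]$ --- it is exactly these feasibility checks that generate the conditions $-4ac(c^{-2}-1)\leq b^{2}$ and $b^{2}<\min\{4(1+|c|)^{2},\,-4ac(c^{-2}-1)\}$, and you never perform them, merely asserting that the first ``arises from the discriminant-type comparison''; and (iii) in the residual regime, compare the interior piece at $r=1$, namely $(|a|+|c|)\sqrt{1-\tfrac{b^{2}}{4ac}}$, against the endpoint values $|a|+|b|-|c|$ and $-|a|+|b|+|c|$, which is where the sub-conditions $|c|(|b|+4|a|)\leq|ab|$ and $|ab|\leq|c|(|b|-4|a|)$ in $R(a,b,c)$ originate. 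Since any sign slip in these quadratic comparisons would move the branch boundaries, and you explicitly defer all of them, what you have is an accurate and well-chosen plan --- matching the method of \cite{Oh} --- but not yet a proof of the stated case distinctions.
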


\section{Coefficient Estimates}

Finding the upper bound for coefficients have been one of the central topic
of research in geometric function theory as it gives several properties of
functions. Therefore, we will be interested in the following problem in this
section.

Problem: Find $\sup \left\vert a_{n}\right\vert $ $(n=2,3,\cdots )$ for
certain sublasses of univalent functions. In particular, bound for the
second coefficient gives growth and distortion theorems for functions in the
class. Another one is the coefficient problem related with Hankel
determinants. Similarly, using the Hankel determinants (which also deals
with the bound on coefficients), Cantor \cite{Ca} proved that
\textquotedblleft if ratio of two bounded analytic functions in $\mathcal{U}$%
, then the function is rational\textquotedblright . The Hankel determinants 
\cite{Noo}$\;H_{q}(n)\;(n=1,2,...,\;q=1,2,...)$ of the function $f$\ are
defined by%
\begin{equation*}
H_{q}(n)=\left\vert 
\begin{array}{cccc}
a_{n} & a_{n+1} & ... & a_{n+q-1} \\ 
a_{n+1} & a_{n+2} & ... & a_{n+q} \\ 
\vdots & \vdots & \ddots & \vdots \\ 
a_{n+q-1} & a_{n+q} & ... & a_{n+2q-2}%
\end{array}%
\right\vert \text{ \ \ }(a_{1}=1).
\end{equation*}

This determinant was discussed by several authors with $q=2$. For example,
we know that the functional $H_{2}(1):=a_{3}-a_{2}^{2}\;$is known as the
Fekete-Szeg\"{o} functional and they consider the further generalized
functional $a_{3}-\mu a_{2}^{2},\;$where $\mu $ is some real number.
Estimating for the upper bound of $\left\vert a_{3}-\mu a_{2}^{2}\right\vert 
$ is known as the Fekete-Szeg\"{o} problem. In 1969, Keogh and Merkes \cite%
{Keo} solved the Fekete-Szeg\"{o} problem for the class $\mathcal{S}^{\ast }$%
. The second Hankel determinant $H_{2}(2)$ is given by $H_{2}(2):=\left\vert
a_{2}a_{4}-a_{3}^{2}\right\vert .$ The bounds for the second Hankel
determinant $H_{2}(2)$ obtained for the class $\mathcal{S}^{\ast }$ in \cite%
{Jan}. Lee et al. \cite{Lee} established the sharp bound to $%
\left\vert H_{2}(2)\right\vert $ by generalizing their classes using
subordination. Moreover, the quantity given by $%
H_{3}(1):=a_{3}(a_{2}a_{4}-a_{3}^{2})-a_{4}(a_{4}-a_{2}a_{3})+a_{5}(a_{3}-a_{2}^{2}) 
$ is called third Hankel determinant. Zaprawa \cite{Zap} proved that $%
\left\vert H_{3}(1)\right\vert <1$ for $f\in \mathcal{S}^{\ast }.$ In 2019,
Kwon, Lecko and Sim \cite{Kls} improved the result of Zaprawa as $\left\vert
H_{3}(1)\right\vert <8\diagup 9.$ This result the best known upper bound of $%
\left\vert H_{3}(1)\right\vert $ for the class $\mathcal{S}^{\ast }.$ Recently, Hankel determinants and Fekete-Szeg\" o problem have been considered in many papers of Srivastava and his co-authors (see, for example, \cite{Shi,Srivastava21,Srivastava22116,Srivastava2223,Srivastava23}) and Deniz and his co-authors (see, \cite{Denizbudak,kazimoglu}).

In this paper, we seek sharp upper bounds for the coefficients $a_{j}$ $%
(j=2,3,4,5,6)$ and functionals $H_{2}(2)$ and $H_{3}(1)\;$for functions $f\;$%
belonging to the class $\mathcal{S}_{G}^{\ast }.$

\begin{theorem}
\label{t1} Let $f(z)=z+a_{2}z^{2}+a_{3}z^{3}+\cdots \in \mathcal{S}%
_{G}^{\ast }.$ Then,%
\begin{eqnarray*}
\left\vert a_{n}\right\vert &\leq &\frac{1}{2(n-1)}\text{ \ \ }(n=2,3,4,5) \\
\left\vert {a_{6}}\right\vert &\leq &\frac{13}{48}.
\end{eqnarray*}%
All of the above estimates, except that on ${a_{6},}$ are sharp.
\end{theorem}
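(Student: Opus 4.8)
The plan is to translate the subordination $zf'(z)/f(z)\prec\Psi(z)$ into a statement about the Carathéodory class $\mathcal P$ and then match each coefficient to one of the lemmas of Section~2. First I would write $zf'(z)/f(z)=\Psi(w(z))$ for a Schwarz function $w$, and introduce $p(z)=1+p_1z+p_2z^2+\cdots=(1+w(z))/(1-w(z))\in\mathcal P$, so that $w=(p-1)/(p+1)$. Expanding $w=c_1z+c_2z^2+\cdots$ with $c_1=\tfrac12 p_1$, $c_2=\tfrac12 p_2-\tfrac14 p_1^2,\dots$, substituting into the generating series $\Psi(w)=1+\sum_{n\ge1}G_nw^n$ with $G_1=\tfrac12,\ G_2=-\tfrac1{12},\ G_3=\tfrac1{24},\dots$, and comparing with $zf'(z)/f(z)=1+\sum_{k\ge1}b_kz^k$, I obtain each $b_k$ as an explicit polynomial in $p_1,\dots,p_k$. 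The Taylor coefficients then follow from the recursion $(n-1)a_n=\sum_{k=1}^{n-1}a_{n-k}b_k$, giving $a_2=\tfrac14 p_1$, $a_3=\tfrac18\bigl(p_2-\tfrac13 p_1^2\bigr)$, and $a_4=\tfrac1{12}\bigl(p_3-\tfrac{19}{24}p_1p_2+\tfrac16 p_1^3\bigr)$, with analogous quartic and quintic expressions for $a_5$ and $a_6$.

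With these formulas in hand, the bounds for $n\le 5$ each match a ready-made estimate. For $a_2$ the inequality $|p_1|\le2$ from (\ref{eq24}) gives $|a_2|\le\tfrac12$. For $a_3$ I apply (\ref{eq26}) with $n=k=1$ and $\mu=\tfrac13$: since $|2\mu-1|=\tfrac13<1$ we get $|p_2-\tfrac13 p_1^2|\le2$ and hence $|a_3|\le\tfrac14$. For $a_4$ I read off $Q=\tfrac{19}{48}$ and $R=\tfrac16$ from the expression above and check the hypotheses of (\ref{eq25}), namely $Q\in[0,1]$ and $Q(2Q-1)\le R\le Q$ (indeed $Q(2Q-1)<0<\tfrac16=R<\tfrac{19}{48}=Q$), whence $|a_4|\le\tfrac16$. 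For $a_5$ the crucial step is to arrange the quartic functional in the exact shape $\rho p_1^4+\varsigma p_2^2+2\tau p_1p_3-\tfrac32\psi p_1^2p_2-p_4$ of (\ref{eq28}), read off the parameters $\tau,\psi,\rho,\varsigma$, and verify the admissibility inequality (\ref{eq27}); Lemma~\ref{lmma3} then yields $|a_5|\le\tfrac18$. Each of these equals $\tfrac1{2(n-1)}$.

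The estimate on $a_6$ is genuinely harder because no single lemma covers a degree-five functional in $p_1,\dots,p_5$. Here I would group the expression so that the highest coefficients $p_4,p_5$ enter only through product combinations controlled by Lemma~\ref{lmma2} (the bounds (\ref{eq24}) and (\ref{eq26})), then substitute the Libera representation (\ref{eq21})--(\ref{eq22}) for $p_2$ and $p_3$ into the part depending only on $p_1,p_2,p_3$, normalise $p_1=p\in[0,2]$ by rotation, and use $|y|\le1$ together with the triangle inequality to reduce the problem to maximising a function of $p$ and $t=|x|\in[0,1]$. Casting the $t$-dependence in the form $|a+bt+ct^2|+1-t^2$ lets me invoke the extremal quantity $Y(a,b,c)$ of Lemma~\ref{l3}, and optimising the resulting one-variable bound over $p\in[0,2]$ produces $|a_6|\le\tfrac{13}{48}$. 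Finally, sharpness for $n=2,3,4,5$ is exhibited by the functions $f_n$ determined by $zf_n'(z)/f_n(z)=\Psi(z^{\,n-1})$, i.e.\ $w(z)=z^{\,n-1}$ (equivalently $p(z)=(1+z^{\,n-1})/(1-z^{\,n-1})$): for these $b_{n-1}=G_1=\tfrac12$ while $b_1=\cdots=b_{n-2}=0$, so $a_2=\cdots=a_{n-1}=0$ and $a_n=\tfrac1{2(n-1)}$.

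The main obstacle is twofold: the bookkeeping required to produce correct polynomial forms for $b_4,b_5$ and hence for $a_5,a_6$, and, more essentially, the final maximisation in the $a_6$ case. Because that maximisation must route through the multi-branch formula for $Y(a,b,c)$ and the crude product estimates for $p_4,p_5$, rather than a single clean Carathéodory functional, the constant $\tfrac{13}{48}$ overshoots the value $\tfrac1{10}=\tfrac1{2(6-1)}$ attained by $f_6$; this gap is precisely why the $a_6$ bound cannot be claimed sharp, the expected sharp value $\tfrac1{10}$ remaining conjectural by this method.
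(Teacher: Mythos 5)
Your treatment of $n=2,3,4,5$ and of sharpness coincides with the paper's proof in essentially every detail: the same passage through $p=(1+w)/(1-w)\in\mathcal{P}$, the same expressions $a_{2}=\frac{p_{1}}{4}$, $a_{3}=\frac{1}{8}\left(p_{2}-\frac{1}{3}p_{1}^{2}\right)$, $a_{4}=\frac{1}{12}\left(p_{3}-\frac{19}{24}p_{1}p_{2}+\frac{1}{6}p_{1}^{3}\right)$, the same lemma assignments ((\ref{eq24}) for $a_{2}$, (\ref{eq26}) with $\mu=\frac{1}{3}$ for $a_{3}$, (\ref{eq25}) with $Q=\frac{19}{48}$, $R=\frac{1}{6}$ for $a_{4}$, Lemma \ref{lmma3} for $a_{5}$), and the same extremal functions $zf_{i}^{\prime}(z)/f_{i}(z)=\Psi(z^{i})$. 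The problem is your $a_{6}$ step, which is both different from the paper's and, as sketched, would fail.

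The paper does not use Lemma \ref{lmma1} or Lemma \ref{l3} anywhere in this proof; the entire point of its $a_{6}$ argument is to keep $p_{4}$ \emph{inside} a second Ravichandran--Verma functional. It splits
\begin{equation*}
\left\vert a_{6}\right\vert \leq \frac{41\left\vert p_{1}\right\vert }{960}\left\vert \frac{2267}{29520}p_{1}^{4}+\frac{779}{984}p_{2}^{2}+\frac{181}{246}p_{1}p_{3}-\frac{15677}{29520}p_{1}^{2}p_{2}-p_{4}\right\vert +\frac{1}{20}\left\vert p_{5}-\frac{23}{24}p_{2}p_{3}\right\vert ,
\end{equation*}
bounds the first bracket by $2$ via Lemma \ref{lmma3} and the second by $2$ via (\ref{eq26}) (here $\mu=\frac{23}{24}$, so $\left\vert 2\mu-1\right\vert<1$), giving $\frac{41}{240}+\frac{1}{10}=\frac{13}{48}$. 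Your plan extracts $p_{5}$ the same way (fine), but then handles $p_{4}$ through crude product bounds from Lemma \ref{lmma2} and maximizes the residual $(p_{1},p_{2},p_{3})$-functional via Libera's representation and $Y(a,b,c)$. This discards exactly the cancellation between $-p_{4}$ and the quartic terms that makes $\frac{13}{48}$ attainable: the terms $\frac{29520}{691200}\left\vert p_{1}p_{4}\right\vert$ and $\frac{1}{20}\left\vert p_{5}-\frac{23}{24}p_{2}p_{3}\right\vert$ alone already account for $\frac{41}{240}+\frac{1}{10}=\frac{13}{48}$ under $\left\vert p_{1}p_{4}\right\vert\leq 4$, while the residual quintic in $p_{1},p_{2},p_{3}$ does not vanish where the first bound saturates; at $p_{1}=2$ (which forces $p_{2}=p_{3}=2$ in the Libera parametrization) it equals $\frac{182432}{691200}\approx 0.264$, and even after absorbing the $p_{1}^{2}p_{3}$ term into a functional $p_{4}-\frac{181}{246}p_{1}p_{3}$ bounded by (\ref{eq26}), a positive residue $\frac{8672}{691200}$ survives there. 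Hence any estimate produced along your route is strictly larger than $\frac{13}{48}$, and the assertion that the Libera-plus-$Y(a,b,c)$ maximization ``produces $\left\vert a_{6}\right\vert\leq\frac{13}{48}$'' is the unproved — and, as described, false — step. The repair is the paper's observation that the block $\frac{2267}{29520}p_{1}^{4}+\frac{779}{984}p_{2}^{2}+\frac{181}{246}p_{1}p_{3}-\frac{15677}{29520}p_{1}^{2}p_{2}-p_{4}$ satisfies the admissibility condition (\ref{eq27}) and is therefore bounded by $2$ as a whole by Lemma \ref{lmma3}. Your closing remarks on non-sharpness and the conjectural value $\frac{1}{10}$ do agree with the paper's Conjecture \ref{c1}.
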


\begin{proof}
Since $f\in \mathcal{S}_{G}^{\ast }$, there exists an analytic function $w$
with $w(0)=0$ and $|w(z)|<1$ in $\mathcal{U}$ such that%
\begin{equation}
\frac{zf^{\prime }(z)}{f(z)}=\Psi (w(z))=\frac{w(z)}{\ln \left(
1+w(z)\right) }\text{ \ \ }(z\in \mathcal{U}).  \label{2.1}
\end{equation}%
Define the functions $p$ by%
\begin{equation*}
p(z)=\frac{1+w(z)}{1-w(z)}=1+p_{1}z+p_{2}z^{2}+\cdots \text{ \ \ }(z\in 
\mathcal{U})
\end{equation*}%
or equivalently,
\begin{eqnarray}
w(z) &=&\frac{p(z)-1}{p(z)+1}=\frac{p_{1}}{2}z+\frac{1}{2}\left( p_{2}-\frac{%
p_{1}^{2}}{2}\right) z^{2}+\frac{1}{2}\left( p_{3}-p_{1}p_{2}+\frac{p_{1}^{3}%
}{4}\right) z^{3}  \label{2.2} \\
&&+\frac{1}{2}\left( p_{4}-p_{1}p_{3}+\frac{3p_{1}^{2}p_{2}}{4}-\frac{%
p_{2}^{2}}{2}-\frac{p_{1}^{4}}{8}\right) z^{4}  \notag \\
&&{+\frac{1}{2}\left( p_{5}-\frac{1}{2}p_{1}^{3}p_{2}+\frac{3}{4}%
p_{1}p_{2}^{2}+\frac{3}{4}p_{1}^{2}p_{3}-p_{2}p_{3}-p_{1}p_{4}+\frac{1}{16}%
p_{1}^{5}\right) z^{5}+}\cdots  \notag
\end{eqnarray}
in $\mathcal{U}.$ Then $p$ is analytic in $\mathcal{U}$ with $p(0)=1$ and
has positive real part in $\mathcal{U}$. By using (\ref{2.2}) together with $%
\frac{w(z)}{\ln \left( 1+w(z)\right) },$ it is evident that \small
\begin{eqnarray}
\Psi (w(z)) &=&{1+\frac{p_{1}z}{4}+\frac{1}{48}\left(
-7p_{1}^{2}+12p_{2}\right) z^{2}}{+\frac{1}{192}\left(
17p_{1}^{3}-56p_{1}p_{2}+48p_{3}\right) z^{3}}  \label{2.3} \\
&&+\frac{1}{11520}\left(
-649p_{1}^{4}+3060p_{1}^{2}p_{2}-3360p_{1}p_{3}-1680p_{2}^{2}+2880p_{4}%
\right) z^{4}  \notag \\
&&+\frac{1}{46080}\left( 
\begin{array}{l}
1739p_{1}^{5}-10384p_{1}^{3}p_{2}+12240p_{1}^{2}p_{3}+12240p_{1}p_{2}^{2} \\ 
-13440p_{2}p_{3}+13440p_{1}p_{4}+11520p_{5}%
\end{array}%
\right) z^{5}+\cdots .  \notag
\end{eqnarray} \normalsize
Since 
\begin{eqnarray}
\frac{zf^{\prime }(z)}{f(z)} &{=}&{1+a_{2}z+\left( -a_{2}^{2}+2a_{3}\right)
z^{2}+\left( a_{2}^{3}-3a_{2}a_{3}+3a_{4}\right) z^{3}}  \label{2.4} \\
&&{+\left( -a_{2}^{4}+4a_{2}^{2}a_{3}-2a_{3}^{2}-4a_{2}a_{4}+4a_{5}\right)
z^{4}}  \notag \\
&&{+\left(
a_{2}^{5}-5a_{2}^{3}a_{3}+5a_{2}a_{3}^{2}+5a_{2}^{2}a_{4}-5a_{3}a_{4}-5a_{2}a_{5}+5a_{6}\right) z^{5}+\cdots ,%
}  \notag
\end{eqnarray}%
it follows by (\ref{2.1}), (\ref{2.3})\textit{\ }and (\ref{2.4}) that 
\begin{eqnarray}
a_{2} &=&{\frac{p_{1}}{4}},  \label{aa2} \\
a_{3} &=&{\frac{1}{24}\left( -p_{1}^{2}+3p_{2}\right) },  \label{aa3} \\
{a_{4}} &{=}&{\frac{1}{288}\left( 4p_{1}^{3}-19p_{1}p_{2}+24p_{3}\right) ,}
\label{aa4} \\
{a_{5}} &{=}&-\frac{1}{16}\left( \frac{71}{720}p_{1}^{4}+\frac{11}{24}%
p_{2}^{2}+\frac{5}{6}p_{1}p_{3}-\frac{85}{144}p_{1}^{2}p_{2}-p_{4}\right) ,
\label{aa5} \\
{a_{6}} &{=}&{\frac{\left( 
\begin{array}{l}
2267p_{1}^{5}-15677p_{1}^{3}p_{2}+21720p_{1}^{2}p_{3}+23370p_{2}^{2}p_{1} \\ 
-29520p_{1}p_{4}-33120p_{2}p_{3}+34560p_{5}%
\end{array}%
\right) }{691200}.}  \label{aa6}
\end{eqnarray}
Thus, we get 
\begin{eqnarray*}
\left\vert a_{2}\right\vert &\leq &\frac{1}{2}~~\left( \text{from (\ref{eq24}%
)}\right) , \\
\left\vert a_{3}\right\vert &=&\frac{1}{8}\left\vert p_{2}-\frac{1}{3}%
p_{1}^{2}\right\vert \leq \frac{1}{4}~~\left( \text{from (\ref{eq26})}%
\right) , \\
\left\vert a_{4}\right\vert &{=}&\frac{1}{12}\left\vert p_{3}-\frac{19}{24}%
p_{1}p_{2}+\frac{1}{6}p_{1}^{3}\right\vert \leq \frac{1}{6}~~\left( \text{%
from (\ref{eq25})}\right) , \\
\left\vert {a_{5}}\right\vert &{=}&\frac{1}{16}\left\vert \frac{71}{720}%
p_{1}^{4}+\frac{11}{24}p_{2}^{2}+\frac{5}{6}p_{1}p_{3}-\frac{85}{144}%
p_{1}^{2}p_{2}-p_{4}\right\vert \leq \frac{1}{8}~~\left( \text{from (\ref%
{eq28})}\right) .
\end{eqnarray*}
We now find the estimates on $\left\vert {a_{6}}\right\vert .$ Therefore,
from the Lemma \ref{lmma3}, $\left\vert p_{n}\right\vert \leq 2$ and (\ref%
{eq26}), we find that \small
\begin{eqnarray*}
\left\vert {a_{6}}\right\vert &{=}&\frac{\left| 
\begin{array}{l}
2267 p_{1}^{5}-15677 p_{1}^{3}p_{2}+21720p_{1}^{2}p_{3} +23370p_{2}^{2}p_{1}
\\ 
-29520p_{1}p_{4}-33120p_{2}p_{3}+34560p_{5}%
\end{array}
\right| }{691200} \\
&\leq & \frac{41\left|p_1\right|}{960}\left|\frac{2267}{29520}p_1^4+\frac{779%
}{984}p_2^2+\frac{181}{246}p_1p_3-\frac{15677}{29520}p_1^2p_2-p_4 \right|+%
\frac{1}{20}\left|p_5-\frac{23}{24}p_2p_3\right| \\
&\leq &\frac{41}{240}+\frac{1}{10}=\frac{13}{48}.
\end{eqnarray*} \normalsize
The first five results are sharp for the function $f:\mathcal{U\rightarrow 
\mathbb{C}}$ given by 
\begin{eqnarray}
f_1(z) &=& z\exp \left( \int_{0}^{z}\frac{\Psi (t)-1}{t}dt\right)=z+\frac{1}{%
2}z^{2}+\cdots,  \label{sharp1} \\
f_2(z) &=& z\exp \left( \int_{0}^{z}\frac{\Psi (t^2)-1}{t}dt\right)=z+\frac{1%
}{4}z^{3}+\cdots,  \label{sharp2} \\
f_3(z) &=& z\exp \left( \int_{0}^{z}\frac{\Psi (t^3)-1}{t}dt\right)=z+\frac{1%
}{6}z^{4}+\cdots,  \label{sharp3} \\
f_4(z) &=& z\exp \left( \int_{0}^{z}\frac{\Psi (t^4)-1}{t}dt\right)=z+\frac{1%
}{8}z^{5}+\cdots,  \label{sharp4} \\
f_5(z) &=& z\exp \left( \int_{0}^{z}\frac{\Psi (t^5)-1}{t}dt\right)=z+\frac{1%
}{10}z^{6}+\cdots.  \label{sharp5}
\end{eqnarray}
This completes the proof of Theorem \ref{t1}.
\end{proof}

\begin{conjecture}
$\allowbreak $\label{c1} Let $f(z)=z+a_{2}z^{2}+a_{3}z^{3}+\cdots \in 
\mathcal{S}_{G}^{\ast }.$ As we see from the extremal functions $%
f_{i}(z),~~i=1,2,3,4$, the first five coeffitients are obtained as our sharp
bound estimates for $\left\vert {a_{n}}\right\vert ,$ $n=2,3,4,5.$ By
comparing with the extremal function $f_{5}(z),$ the sixth coefficient is
expected as the sharp bound estimate for $\left\vert {a_{6}}\right\vert .$
So it is an open question whether the bound for $\left\vert {a_{6}}%
\right\vert $ is sharp and if one can show the following estimate $%
\left\vert {a_{6}}\right\vert \leq \frac{1}{10}$. Also, it is an open
question the sharp inequality $\left\vert a_{n}\right\vert \leq \frac{1}{%
2(n-1)}$ for all $n\in 
%TCIMACRO{\U{2115} }%
%BeginExpansion
\mathbb{N}
%EndExpansion
\backslash \{1\}.$
\end{conjecture}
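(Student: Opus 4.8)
The plan is to attack the two assertions of Conjecture \ref{c1} separately, with two different engines. Throughout write $\Psi(w(z))=1+\sum_{k\ge1}b_kz^k$, so that comparing $zf'(z)=f(z)\Psi(w(z))$ coefficientwise yields the recursion
\[
(n-1)a_n=\sum_{k=1}^{n-1}b_k\,a_{n-k}\qquad(a_1=1).
\]
Since $\log(f(z)/z)=2\sum_{n\ge1}\gamma_nz^n$ gives $zf'/f-1=2\sum_{n\ge1} n\gamma_nz^n$, the logarithmic coefficients satisfy $\gamma_n=b_n/(2n)$. The subordination $\Psi(w(z))\prec\Psi(z)$ supplies two inputs: Littlewood's area/subordination inequality always gives the $\ell^2$ bound $\sum_{k\ge1}|b_k|^2\le\sum_{k\ge1}|G_k|^2=:C^2$, and, provided one first checks that $\Psi$ is convex univalent (the image $\Psi(\mathcal U)$ in the figure appears convex), Rogosinski's theorem gives the uniform bound $|b_k|\le G_1=\tfrac12$. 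The extremal functions $f_{n-1}$ correspond to $w(z)=z^{\,n-1}$, i.e.\ $b_{n-1}=G_1$ with all other $b_k$ of higher order, for which the recursion collapses to $(n-1)a_n=b_{n-1}=\tfrac12$; this confirms that equality $|a_n|=\tfrac1{2(n-1)}$ is attained, so both conjectured bounds are sharp and it remains only to prove the upper estimates.

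For the sixth coefficient I would abandon the triangle inequality (which produced the non-sharp $13/48$) in favour of the full Carath\'eodory--Toeplitz representation. Using $(\ref{eq21})$--$(\ref{eq22})$ together with their analogues expressing $p_4,p_5$ through $p_1$ and auxiliary parameters in $\overline{\mathcal U}$, one substitutes into $(\ref{aa6})$ and, for each fixed $p_1=c\in[0,2]$, maximizes the resulting expression; the $y$-integration produces terms quadratic in $|x|$ that are handled by the function $Y(a,b,c)$ of Lemma \ref{l3}. The decisive structural point is that the extremum for $a_6$ sits at $p_1=\dots=p_4=0,\ p_5=2$ (the datum of $f_5$), and the positivity of the Toeplitz determinants $\det[p_{i-j}]_{i,j}\ge0$ forces $|p_5|$ near $2$ only when $p_1,\dots,p_4$ are near $0$; exploiting this coupling is exactly what lets the block $\tfrac1{20}|p_5-\tfrac{23}{24}p_2p_3|$ reach $\tfrac1{10}$ while the $p_1$-weighted block simultaneously vanishes. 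The main obstacle here is the genuinely multivariate optimization: $a_6$ depends on $p_1,\dots,p_5$, so after fixing $p_1$ one still faces several bounded complex parameters together with a delicate boundary/interior case split.

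For the general estimate $|a_n|\le\frac1{2(n-1)}$ I would pass to the logarithmic coefficients, where $|\gamma_k|\le\frac1{4k}$, and attempt either a Lebedev--Milin exponentiation of $f(z)/z=\exp(2\sum_{k\ge1}\gamma_kz^k)$ or a direct induction on the recursion. Splitting off the $k=n-1$ term and applying Cauchy--Schwarz with the $\ell^2$ bound gives
\[
(n-1)|a_n|\le |b_{n-1}|+\Big(\textstyle\sum_{k\ne n-1}|b_k|^2\Big)^{1/2}\Big(\textstyle\sum_{m=2}^{n-1}|a_m|^2\Big)^{1/2}\le \beta+\sqrt{(C^2-\beta^2)\,S},
\]
where $\beta=|b_{n-1}|\le\tfrac12$ and, by the inductive hypothesis, $S=\tfrac14\sum_{j\ge1}j^{-2}=\pi^2/24$. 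Maximizing the right-hand side over $\beta\in[0,\tfrac12]$ already yields $|a_n|\le\frac{\kappa}{n-1}$ with an absolute constant $\kappa$ that exceeds $\tfrac12$, so this route captures the correct order but not the sharp constant. The main obstacle is precisely this gap: near the extremal the intermediate coefficients $a_2,\dots,a_{n-1}$ all vanish, a rigidity that neither the individual bounds $|b_k|\le\tfrac12$ nor the single scalar constraint $\sum|b_k|^2\le C^2$ can detect. Closing it would require sharper, simultaneous control of the whole vector $(b_k)$ --- for instance through Goluzin/Grunsky-type inequalities attached to $\Psi$, or the full strength of the Lebedev--Milin inequalities --- and it is this missing input that keeps $|a_n|\le\frac1{2(n-1)}$ at the level of a conjecture.
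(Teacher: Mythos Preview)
The statement is a \emph{Conjecture} in the paper, and the paper provides no proof of it; the authors explicitly present both assertions ($|a_6|\le\tfrac1{10}$ and the general bound $|a_n|\le\tfrac1{2(n-1)}$) as open questions. There is therefore no ``paper's own proof'' to compare your proposal against.

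Your write-up is not a proof either, and to your credit you say so yourself: for $a_6$ you sketch a multivariate Carath\'eodory--Toeplitz optimization and immediately flag the ``genuinely multivariate'' obstacle; for the general bound you run a Cauchy--Schwarz induction, obtain only $|a_n|\le\kappa/(n-1)$ with some $\kappa>\tfrac12$, and conclude that the sharp constant is out of reach by these means. As a discussion of why the conjecture is hard and what machinery might be relevant, the note is reasonable; as a proof, nothing has been established---which is exactly the status the paper reports.

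Two technical cautions on the outline itself. First, your appeal to Rogosinski's theorem for the uniform bound $|b_k|\le G_1=\tfrac12$ requires $\Psi$ to be \emph{convex} univalent, which you infer only from the figure; this is nowhere proved in the paper, and the image of $\Psi$ is not obviously convex, so that step is currently unjustified. Second, the analogues of Lemma~\ref{lmma1} for $p_4$ and $p_5$ introduce several additional unit-disk parameters beyond $x$ and $y$, so the reduction to a single $Y(a,b,c)$ problem via Lemma~\ref{l3} that works for $H_2(2)$ does not carry over in any routine way to the $a_6$ optimization; the ``decisive structural point'' you describe (that the Toeplitz positivity forces $p_1,\dots,p_4$ near $0$ when $|p_5|$ is near $2$) is heuristically correct but turning it into a sharp global bound is precisely the unresolved difficulty.
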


In 2021, Deniz \cite{Deniz} defined a subclass of starlike functions which
is related with generalized telephone numbers. He obtained some sharp bounds
of coefficients for this class. These results are as follows: $\left\vert
a_{2}\right\vert \leq {1},$ $\left\vert a_{3}\right\vert \leq {1,}$ $%
\left\vert {a_{4}}\right\vert {\leq }{\frac{8}{9},}$ $\left\vert {a_{5}}%
\right\vert {\leq }\frac{107}{144}$ and $\left\vert {a_{6}}\right\vert \leq 
\frac{2381}{3600}.$ When we compare these results with Theorem \ref{t1}, seen that our results are better.

\begin{theorem}
$\allowbreak $\label{t2} Let $f(z)=z+a_{2}z^{2}+a_{3}z^{3}+\cdots \in 
\mathcal{S}_{G}^{\ast }.$ Then, the following sharp estimates holds:%
\begin{equation*}
\left\vert {a}_{3}-\mu {a_{2}^{2}}\right\vert \leq \frac{1}{4}\max
\{1,\left\vert \mu -\frac{1}{3}\right\vert \},\text{ \ \ }\left( \mu \in 
%TCIMACRO{\U{2102} }%
%BeginExpansion
\mathbb{C}
%EndExpansion
\right)
\end{equation*}%
\begin{equation*}
\left\vert {a_{2}a}_{3}-a_{4}\right\vert \leq \frac{1}{6}
\end{equation*}%
and%
\begin{equation*}
\left\vert {a_{2}a}_{4}-a_{3}^{2}\right\vert \leq \frac{1}{16}.
\end{equation*}
\end{theorem}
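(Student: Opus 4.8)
The plan is to express each of the three functionals through the coefficients $p_{1},p_{2},p_{3}$ of an auxiliary function $p\in\mathcal{P}$ by means of the representations (\ref{aa2})--(\ref{aa4}) already established in the proof of Theorem \ref{t1}, and then to quote the coefficient inequalities of Section 2. Throughout I would normalise $p_{1}=p\in[0,2]$ (legitimate because $\mathcal{S}_{G}^{\ast}$ and each functional are rotation invariant, and the lemmas assume $p_{1}\geq0$). Sharpness in each case I would read off from the functions $f_{i}$ of Theorem \ref{t1}: $f_{1}$ and $f_{2}$ for the Fekete-Szeg\"{o} functional, $f_{3}$ for $|a_{2}a_{3}-a_{4}|$, and $f_{2}$ for the second Hankel determinant.

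The first two parts are routine. Combining (\ref{aa2}) and (\ref{aa3}) gives $a_{3}-\mu a_{2}^{2}=\frac18\big(p_{2}-(\frac13+\frac{\mu}{2})p_{1}^{2}\big)$; applying (\ref{eq26}) with the parameter $\frac13+\frac{\mu}{2}$ in place of $\mu$, and noting $2(\frac13+\frac{\mu}{2})-1=\mu-\frac13$, yields the stated estimate at once. For $|a_{2}a_{3}-a_{4}|$, the formulas (\ref{aa2})--(\ref{aa4}) reduce the quantity to $\frac1{12}\big|p_{3}-\frac76 p_{1}p_{2}+\frac{7}{24}p_{1}^{3}\big|$, which is exactly the functional in (\ref{eq25}) with $Q=\frac{7}{12}$ and $R=\frac{7}{24}$; one checks $Q\in[0,1]$ and $Q(2Q-1)=\frac{7}{72}\le\frac{7}{24}=R\le\frac{7}{12}=Q$, so (\ref{eq25}) gives the bound $\frac16$.

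The substantive part is the second Hankel determinant. Here (\ref{aa2})--(\ref{aa4}) give $a_{2}a_{4}-a_{3}^{2}=\frac{1}{1152}\big(2p_{1}^{4}-7p_{1}^{2}p_{2}+24p_{1}p_{3}-18p_{2}^{2}\big)$, so it suffices to bound $\Phi:=2p_{1}^{4}-7p_{1}^{2}p_{2}+24p_{1}p_{3}-18p_{2}^{2}$ by $72$. Substituting the Libera representations (\ref{eq21})--(\ref{eq22}) for $p_{2},p_{3}$ and writing $q:=4-p^{2}$, the quartic-in-$p$ terms cancel and $\Phi$ collapses to $\Phi=-\tfrac{q}{2}\big(p^{2}x+(3p^{2}+36)x^{2}\big)+12pq(1-|x|^{2})y$. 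Taking $|y|\leq1$ and factoring out $12pq$ (for $0<p<2$; the endpoints $p=0$, where $\Phi=-18p_{2}^{2}$ and $|p_{2}|\le2$, and $p=2$, where $\Phi=0$, are checked directly and give $|\Phi|\le72$) casts the estimate as $|\Phi|\leq 12pq\,Y(a,b,c)$, with $a=0,\ b=\frac{p}{24},\ c=\frac{p^{2}+12}{8p}$, in the notation of Lemma \ref{l3}. Since $a=0$ forces $ac\geq0$ and $c=\frac{p^{2}+12}{8p}\geq1$ on $(0,2]$ (equality at $p=2$), the inequality $|b|\geq2(1-|c|)$ holds and only the first branch of Lemma \ref{l3} applies, giving $Y=|b|+|c|$. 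Hence $|\Phi|\leq 12pq\big(\frac{p}{24}+\frac{p^{2}+12}{8p}\big)=2(4-p^{2})(p^{2}+9)$, a function decreasing in $p^{2}$ on $[0,4]$ and thus bounded by its value $72$ at $p=0$; therefore $|a_{2}a_{4}-a_{3}^{2}|\le\frac{72}{1152}=\frac{1}{16}$, attained at $p_{1}=0,\ |p_{2}|=2$.

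The main obstacle is confined to the last part, and it is essentially organisational rather than conceptual: one must carry out the Libera substitution carefully enough that the cancellation of the $p^{4}$-terms is transparent and that the residual $x$-polynomial appears in the precise shape $a+bx+cx^{2}$ demanded by Lemma \ref{l3}. Once this normal form is reached, the two features $a=0$ and $c\geq1$ throughout $(0,2]$ trivialise the otherwise delicate case analysis of Lemma \ref{l3}, and the concluding one-variable maximisation over $p\in[0,2]$ is immediate.
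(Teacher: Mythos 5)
Your proposal is correct and follows essentially the same route as the paper: the same representations (\ref{aa2})--(\ref{aa4}), the Keogh--Merkes/(\ref{eq26}) bound for the Fekete--Szeg\H{o} functional, inequality (\ref{eq25}) with $Q=\frac{7}{12}$, $R=\frac{7}{24}$ for $|a_2a_3-a_4|$, and the Libera parametrization reduced to Lemma \ref{l3} with $a=0$ followed by the one-variable maximization yielding $\frac{1}{16}$. The only differences are cosmetic improvements: your observation that $|\widetilde{c}|=\frac{p^{2}+12}{8p}\geq 1$ on $(0,2]$ makes $|\widetilde{b}|\geq 2(1-|\widetilde{c}|)$ immediate (the paper instead verifies $7s^{2}-48s+72>0$), and you make the sharpness witnesses $f_{1},f_{2},f_{3}$ explicit where the paper leaves them implicit.
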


\begin{proof}
From (\ref{aa2}) and (\ref{aa3})\textit{, }we have%
\begin{equation*}
{a}_{3}-\mu {a_{2}^{2}}={\frac{1}{24}\left( -p_{1}^{2}+3p_{2}\right) }-\mu {%
\frac{p_{1}^{2}}{16}}=\frac{1}{8}\left( p_{2}-\left( \frac{3\mu +2}{6}%
\right) p_{1}^{2}\right) .
\end{equation*}%
If we take $\nu =\frac{3\mu +2}{6}$ in known result (see \cite{Keo}): $%
\left\vert {p}_{2}-\nu {p_{1}^{2}}\right\vert \leq 2\max \{1,\left\vert 2\nu
-1\right\vert \}$ for $\mu ,\nu \in 
%TCIMACRO{\U{2102} }%
%BeginExpansion
\mathbb{C}
%EndExpansion
,$ we obtain that%
\begin{equation*}
\left\vert {a}_{3}-\mu {a_{2}^{2}}\right\vert \leq \frac{1}{4}\max
\{1,\left\vert \mu -\frac{1}{3}\right\vert \}.
\end{equation*}%
Similarly from (\ref{aa2}), (\ref{aa3}) and (\ref{aa4}), we have
\begin{equation*}
{a_{2}a}_{3}-a_{4}{=}-\frac{1}{288}\left(
24p_{3}-28p_{1}p_{2}+7p_{1}^{3}\right)
\end{equation*}%
and so from (\ref{eq25})
\begin{equation*}
\left\vert {a_{2}a}_{3}-a_{4}\right\vert {\leq }\frac{1}{288}\left\vert
24p_{3}-28p_{1}p_{2}+7p_{1}^{3}\right\vert =\frac{1}{12}\left\vert p_{3}-%
\frac{7}{6}p_{1}p_{2}+\frac{7}{24}p_{1}^{3}\right\vert \leq \frac{1}{6}.
\end{equation*}

Now, we investigate last estimate in Theorem \ref{t2}. From (\ref{aa2}), (%
\ref{aa3}) and (\ref{aa4}) again, we see that%
\begin{equation*}
{a_{2}a}_{4}-a_{3}^{2}=\frac{%
2p_{1}^{4}-7p_{1}^{2}p_{2}-18p_{2}^{2}+24p_{1}p_{3}}{1152}:=T,
\end{equation*}%
which, upon applying Lemma \ref{lmma1} and assuming that $s=p_{1}\in \lbrack
0,2]$, we can write 
\begin{equation*}
T=\frac{4-s^{2}}{2304}\left( -s^{2}x-3\left( s^{2}+12\right) x^{2}+24s\left(
1-\left\vert x\right\vert ^{2}\right) y.\right)
\end{equation*}%
If $s=0,$ then $T=-\frac{1}{16}x^{2}.$ Thus, since $\left\vert x\right\vert
\leq 1$, we have 
\begin{equation}
\left\vert T\right\vert \leq \frac{1}{16}.  \label{L29}
\end{equation}%
If $s=2,$ then 
\begin{equation}
\left\vert T\right\vert =0.  \label{L30}
\end{equation}

We now let $s\in (0,2).$ Then, we can write%
\begin{eqnarray*}
\left\vert T\right\vert &=&\left|\frac{4-s^2}{2304}\left(-s^2x-3\left(s^2+12%
\right)x^2+24s\left(1-\left|x\right|^2\right)y. \right)\right| \\
&\leq& \frac{s\left(4-s^2\right)}{96}\left[\left|-\frac{s}{24}x-\frac{s^2+12%
}{8s}x^2\right|+1-\left\vert x\right\vert ^{2} \right] \\
&=&\frac{s\left(4-s^2\right)}{96}\left[ \left\vert \widetilde{a}+\widetilde{b%
}x+\widetilde{c}x^{2}\right\vert +1-\left\vert x\right\vert ^{2}\right]
\end{eqnarray*}
where 
\begin{equation*}
\widetilde{a}=0,\;\widetilde{b}=-\frac{s}{24},\;\widetilde{c}=-\frac{
s^{2}+12}{8s}.
\end{equation*}
It follows that $\widetilde{a}\widetilde{c}=0.$ Also, we easily see that 
\begin{equation*}
\left\vert \widetilde{b}\right\vert -2(1-\left\vert \widetilde{c}\right\vert
)=\frac{7s^{2}-48s+72}{24s}>0.
\end{equation*}
Therefore, we have%
\begin{eqnarray*}
\left\vert T\right\vert &\leq & \frac{s\left(4-s^2\right)}{96}\left(
\left\vert \widetilde{a}\right\vert +\left\vert \widetilde{b}\right\vert
+\left\vert \widetilde{c}\right\vert \right)=\frac{s\left(4-s^2\right)}{96}%
\left(\frac{s^2+9}{6s} \right) \\
&=&\frac{-s^4-5s^2+36}{576}.
\end{eqnarray*}
Let $t=s^{2}$ $\left( t\in (0,4)\right) .$ Then, we investigate maximum of
the function $H_{0}$ defined by%
\begin{equation*}
H_{0}(t)=\frac{-t^2-5t+36}{576}.
\end{equation*}%
In that case, we have 
\begin{equation}
\left\vert T\right\vert \leq \left\vert H_{0}(t)\right\vert \leq \frac{%
-t^2-5t+36}{576}\leq \frac{1}{16}.  \label{L31}
\end{equation}
\end{proof}

\begin{remark}
\label{R2} By using Theorem \ref{t1} and Theorem \ref{t2}, we have 
\begin{equation*}
\left\vert H_{3}(1)\right\vert \leq \left\vert a_{3}\right\vert \left\vert
a_{2}a_{4}-a_{3}^{2}\right\vert +\left\vert a_{4}\right\vert \left\vert
a_{4}-a_{2}a_{3}\right\vert +\left\vert a_{5}\right\vert \left\vert
a_{3}-a_{2}^{2}\right\vert \leq \frac{43}{576}.
\end{equation*}
\end{remark}

If we compare the bound $\frac{43}{576}$ with the results of Zaprawa \cite%
{Zap} and Deniz \cite{Deniz}, see that this bound is better.

\subsection{Logarithmic Coefficients}

The following formula defines the logarithmic coefficients $\beta_n$ of $%
f(z)=z+\sum_{n=2}^{\infty}a_nz^n$ that belongs to $\mathcal{S}$ 
\begin{equation}  \label{eq313}
G_f\left(z\right):=\log \left(\frac{f\left(z\right)}{z}\right)=2\sum_{n=1}^{%
\infty}\beta_nz^n~~~ \text{for}~z\in \mathcal{U}.
\end{equation}
In many estimations, these coefficients provide a significant contribution
to the concept of univalent functions. In 1985, De Branges \cite{Branges}
proved that 
\begin{equation}  \label{eq314}
\sum_{k=1}^{n}k\left(n-k+1\right)\left|\beta_n\right|^2\leq \sum_{k=1}^{n}%
\frac{n-k+1}{k}~~~\forall~ n\geq 1
\end{equation}
and equality will be achieved if $f$ has the form $z/\left(1-e^{i\theta}z%
\right)^2$ for some $\theta \in \mathbb{R}.$ In its most comprehensive
version, this inequality offers the famous Bieber-bach-Robertson-Milin
conjectures regarding Taylor coefficients of $f\in \mathcal{S}.$ We refer to 
\cite{Avkhadiev,FitzGerald1985,FitzGerald1987} for further details on the
proof of De Branges finding. By considering the
logarithmic coefficients, Kayumov \cite{Kayumov} was able to prove Brennan's conjecture for conformal mappings in 2005. For your
reference, we mention a few works that have made major contributions to the
research of the logarithmic coefficients. Andreev and Duren \cite{Andreev},
Alimohammadi et al. \cite{Alimohammadi}, Deng \cite{Deng}, Roth \cite{Roth},
Ye \cite{Ye}, Obradovi\' c et al. \cite{Obradovic}, and finally the work of
Girela \cite{Girela} are the major contributions to the study of logarithmic
coefficients for different subclasses of holomorphic univalent functions.

As stated in the definition, it is simple to determine that for $f\in 
\mathcal{S},$ the logarithmic coefficients are computed by 
\begin{equation}  \label{eq315}
\beta_1=\frac{1}{2}a_2,
\end{equation}
\begin{equation}  \label{eq316}
\beta_2=\frac{1}{2}\left(a_3-\frac{1}{2}a_2^2\right),
\end{equation}
\begin{equation}  \label{eq317}
\beta_3=\frac{1}{2}\left(a_4-a_2a_3+\frac{1}{3}a_2^3\right),
\end{equation}
\begin{equation}  \label{eq318}
\beta_4=\frac{1}{2}\left(a_5-a_2a_4+a_2^2a_3-\frac{1}{2}a_3^2-\frac{1}{4}%
a_2^4\right).
\end{equation}

\begin{theorem}
\label{logT1} If $f\in \mathcal{S}^*_G$ and has the series representation $%
f(z)=z+\sum_{n=2}^{\infty}a_nz^n$ then 
\begin{equation}  \label{eq319}
\left|\beta_n\right| \leq \frac{1}{4n}~~~\left(n=1,2,3,4\right).
\end{equation}
These bounds are sharp and can be obtained from the extremal functions $%
f_i(z)~~i=1,2,3,4$ given by (\ref{sharp1})-(\ref{sharp4}).
\end{theorem}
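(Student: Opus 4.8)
The plan is to reduce each logarithmic coefficient to a Carath\'eodory-coefficient functional that is already controlled by the lemmas of Section~2, and then invoke the appropriate bound. First I would substitute the representations (\ref{aa2})--(\ref{aa5}) of $a_2,\dots,a_5$ in terms of the coefficients $p_j$ of the associated function $p\in\mathcal{P}$ into the identities (\ref{eq315})--(\ref{eq318}). A direct computation should give
\begin{equation*}
\beta_1=\frac{p_1}{8},\qquad \beta_2=\frac{1}{16}\left(p_2-\frac{7}{12}p_1^2\right),\qquad \beta_3=\frac{1}{24}\left(p_3-\frac{7}{6}p_1p_2+\frac{17}{48}p_1^3\right),
\end{equation*}
and
\begin{equation*}
\beta_4=-\frac{1}{32}\left(\frac{649}{2880}p_1^4+\frac{7}{12}p_2^2+\frac{7}{6}p_1p_3-\frac{17}{16}p_1^2p_2-p_4\right).
\end{equation*}

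Each of the first three cases then follows from a single inequality from Section~2. For $n=1$, the estimate $|p_1|\le 2$ of (\ref{eq24}) gives $|\beta_1|\le \tfrac14$. For $n=2$, I would apply (\ref{eq26}) (equivalently the Keogh--Merkes bound used in Theorem~\ref{t2}) with $\nu=\tfrac{7}{12}$: since $|2\nu-1|=\tfrac16<1$, one gets $\left|p_2-\tfrac{7}{12}p_1^2\right|\le 2$, hence $|\beta_2|\le\tfrac18$. For $n=3$, the functional matches (\ref{eq25}) with $2Q=\tfrac76$ and $R=\tfrac{17}{48}$, i.e. $Q=\tfrac{7}{12}\in[0,1]$; one checks that $Q(2Q-1)=\tfrac{7}{72}\le \tfrac{17}{48}\le \tfrac{7}{12}=Q$, so $\left|p_3-\tfrac76 p_1p_2+\tfrac{17}{48}p_1^3\right|\le 2$ and $|\beta_3|\le\tfrac{1}{12}$.

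The main work is the case $n=4$, which I would handle with Lemma~\ref{lmma3}. Writing the bracketed expression for $\beta_4$ in the form of (\ref{eq28}) identifies the parameters $\tau=\varsigma=\tfrac{7}{12}$, $\psi=\tfrac{17}{24}$, and $\rho=\tfrac{649}{2880}$, which clearly satisfy $0<\tau<1$ and $0<\varsigma<1$. The crux is to verify the admissibility inequality (\ref{eq27}) for exactly these numbers. Evaluating the relevant differences one finds $\tau\psi-2\rho=-\tfrac{3}{80}$ and $\left(\tau(\varsigma+\tau)-\psi\right)^2=\left(\psi-2\varsigma\tau\right)^2=\tfrac{1}{1296}$, after which (\ref{eq27}) collapses to a comparison of two explicit rationals whose left-hand side is an order of magnitude below the right-hand side. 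Granting (\ref{eq27}), Lemma~\ref{lmma3} yields $\left|\tfrac{649}{2880}p_1^4+\tfrac{7}{12}p_2^2+\tfrac{7}{6}p_1p_3-\tfrac{17}{16}p_1^2p_2-p_4\right|\le 2$ and therefore $|\beta_4|\le\tfrac{1}{32}\cdot 2=\tfrac{1}{16}$.

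I expect the only delicate part to be the bookkeeping in assembling $\beta_4$ from (\ref{aa5}), together with pinning down the correct parameters and confirming (\ref{eq27}); everything else is a direct appeal to the coefficient lemmas. Finally, the sharpness for each $n=1,2,3,4$ follows by computing the logarithmic coefficients of the extremal functions $f_i$ in (\ref{sharp1})--(\ref{sharp4}) via (\ref{eq313}), for which the relevant $\beta_n$ attains the value $\tfrac{1}{4n}$.
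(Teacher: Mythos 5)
Your proposal is correct and takes essentially the same route as the paper's own proof: the same reductions $\beta_1=\tfrac{p_1}{8}$, $\beta_2=\tfrac{1}{16}\left(p_2-\tfrac{7}{12}p_1^2\right)$, $\beta_3=\tfrac{1}{24}\left(p_3-\tfrac{7}{6}p_1p_2+\tfrac{17}{48}p_1^3\right)$, the same appeals to (\ref{eq24}), (\ref{eq26}), and (\ref{eq25}) with $Q=\tfrac{7}{12}$, $R=\tfrac{17}{48}$, and for $n=4$ the same application of Lemma \ref{lmma3} with $\rho=\tfrac{649}{2880}$, $\varsigma=\tau=\tfrac{7}{12}$, $\psi=\tfrac{17}{24}$, whose condition (\ref{eq27}) the paper also verifies numerically (left side $\approx 0.00442$ against right side $\approx 0.0574$, consistent with your exact values $\tau\psi-2\rho=-\tfrac{3}{80}$ and $\bigl(\tau(\varsigma+\tau)-\psi\bigr)^2=(\psi-2\varsigma\tau)^2=\tfrac{1}{1296}$). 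The sharpness claim via the extremal functions $f_1,\dots,f_4$ of (\ref{sharp1})--(\ref{sharp4}) likewise matches the paper.
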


\begin{proof}
Let $f\in \mathcal{S}^*_G.$ Then, putting (\ref{eq315}), (\ref{eq316}), (\ref%
{eq317}) and (\ref{eq318}) in (\ref{aa2}), (\ref{aa3}), (\ref{aa4}) and (\ref%
{aa5}), we get 
\begin{eqnarray}
\beta_1 &=&{\frac{p_{1}}{8}},  \label{ba1} \\
\beta_2 &=&{\frac{1}{192}\left( -7p_{1}^{2}+12p_{2}\right) },  \label{ba2} \\
\beta_3 &{=}&{\frac{1}{1152}\left( 17p_{1}^{3}-56p_{1}p_{2}+48p_{3}\right) ,}
\label{ba3} \\
\beta_4 &{=}&-\frac{1}{92160}\left( 649p_{1}^{4}+1680
p_{2}^{2}+3360p_{1}p_{3}-3060p_{1}^{2}p_{2}-2280p_{4}\right) .  \label{ba4}
\end{eqnarray}
For $\beta_1,$ using (\ref{eq24}) in(\ref{ba1}), we obtain 
\begin{equation*}
\left|\beta_1\right|\leq \frac{1}{4}.
\end{equation*}
For $\beta_2,$ putting (\ref{eq26}) in(\ref{ba2}), we get 
\begin{equation*}
\left|\beta_2\right|\leq \frac{1}{8}.
\end{equation*}
For $\beta_3,$ we can rewrite (\ref{ba3}) as 
\begin{equation*}
\left|\beta_3\right| =\frac{1}{24} \left|p_{3}-\frac{7}{6}p_{1}p_{2}+\frac{17%
}{48}p_{1}^{3}\right|,
\end{equation*}
using (\ref{eq25}) where $Q=\frac{7}{12}$ and $R=\frac{17}{48},$ we obtain 
\begin{equation*}
\left|\beta_3\right|\leq \frac{1}{12}.
\end{equation*}
For $\beta_4,$ we can rewrite (\ref{ba4}) as 
\begin{equation}  \label{eq324}
\beta_4 =-\frac{1}{32}\left( \frac{649}{2880}p_{1}^{4}+\frac{7}{12}
p_{2}^{2}+\frac{7}{6}p_{1}p_{3}-\frac{17}{16}p_{1}^{2}p_{2}-p_{4}\right).
\end{equation}
Comparing the right side of (\ref{eq324}) with inequality (\ref{eq28}),
where $\rho=\frac{649}{2880},$ $\varsigma=\frac{7}{12},$ $\tau=\frac{7}{12}$
and $\psi=\frac{17}{24}.$ It follows that 
\begin{equation*}
8\varsigma \left(1-\varsigma\right)\left[\left(\tau\psi-2\rho\right)^2+%
\left(\tau\left(\varsigma+\tau\right)-\psi \right)^2 \right]+\tau
\left(1-\tau\right)\left(\psi-2\varsigma\tau\right)^2=0.00442226
\end{equation*}
and 
\begin{equation*}
4\varsigma\tau^2\left(1-\tau\right)^2\left(1-\varsigma\right)=0.057435.
\end{equation*}
Using (\ref{eq27}) we deduce that 
\begin{equation*}
\left|\beta_4\right|\leq \frac{1}{16}.
\end{equation*}
\end{proof}

\subsection{Inverse Coefficients}

It is well-known that the function 
\begin{equation*}
f(z)=z+\sum_{n=2}^{\infty }a_{n}z^{n}\in \mathcal{S}
\end{equation*}%
has an inverse $f^{-1},$ which is analytic in $\left\vert w\right\vert <1/4,$
as we know from Koebe's $1/4$-theorem. If $f\in \mathcal{S},$ then 
\begin{equation}
f^{-1}(w)=w+A_{2}w^{2}+A_{3}w^{3}+\cdots ,~~~~\left\vert w\right\vert <1/4.
\label{eq330}
\end{equation}%
L\"{o}wner \cite{Lowner} proved that, if $f\in \mathcal{S}$ and its inverse
is given by (\ref{eq330}), then the sharp estimate 
\begin{equation}
\left\vert A_{n}\right\vert \leq \frac{\left( 2n\right) !}{n!\left(
n+1\right) !}  \label{eq331}
\end{equation}%
holds. It has been shown that the inverse of the Koebe function $%
k(z)=z/\left( 1-z\right) ^{2}$ provides the best bounds for all $\left\vert
A_{n}\right\vert ~~\left( n=2,3,\ldots \right) $ in (\ref{eq331}) over all
members of $\mathcal{S}.$ There has been a good deal of interest in
determining the behavior of the inverse coefficients of $f$ given in (\ref%
{eq330}) when the corresponding function $f$ is restricted to some proper
geometric subclasses of $\mathcal{S}.$ Alternate proofs of the inequality (%
\ref{eq331}) have been given by several authors but a simpler proof was
given by Yang \cite{Yang}.\newline
Since $f\left( f^{-1}(w)\right) =w,$ using (\ref{eq330}) it's very clear to
see 
\begin{equation}\label{eqson}
\begin{split}
A_{2}=& -a_{2}, \\
A_{3}=& 2a_{2}^{2}-a_{3}, \\
A_{4}=& -5a_{2}^{3}+5a_{2}a_{3}-a_{4}.
\end{split}%
\end{equation}

\begin{theorem}
\label{InvT1} If $f\in \mathcal{S}^*_G$ and has the series representation $%
f^{-1}(w)=w+A_2w^2+A_3w^3+\cdots$ then 
\begin{equation}  \label{eq325}
\begin{split}
\left|A_2\right|& \leq \frac{1}{2}, \\
\left|A_3\right|& \leq \frac{5}{12}, \\
\left|A_4\right|& \leq \frac{31}{72}.
\end{split}%
\end{equation}
These bounds are sharp, except $A_4.$
\end{theorem}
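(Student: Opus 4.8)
The plan is to reduce every inverse coefficient to the Carath\'eodory data $p_n$ and then feed it into the estimates of Section~2. Substituting the coefficient formulas (\ref{aa2})--(\ref{aa4}) for $a_2,a_3,a_4$ into the inversion relations (\ref{eqson}), I would first record
\begin{equation*}
A_2=-\frac{p_1}{4},\qquad A_3=-\frac{1}{8}\left(p_2-\frac{4}{3}p_1^2\right),\qquad A_4=\frac{-83p_1^3+128p_1p_2-48p_3}{576}.
\end{equation*}
The bound $|A_2|\le\tfrac12$ is then immediate from $|p_1|\le2$ in (\ref{eq24}). For $A_3$ I would apply the Fekete--Szeg\"o-type inequality (\ref{eq26}) with $n=k=1$ and $\mu=\tfrac43$; since $\max\{1,|2\mu-1|\}=\tfrac53$, this gives $|A_3|\le\tfrac18\cdot\tfrac{10}{3}=\tfrac{5}{12}$.

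The coefficient $A_4$ is the crux. One cannot simply invoke (\ref{eq25}): rewriting $A_4=-\tfrac{1}{12}\bigl(p_3-\tfrac83 p_1p_2+\tfrac{83}{48}p_1^3\bigr)$ would force $Q=\tfrac43>1$, which lies outside the admissible range of Lemma~\ref{lmma2}. Instead I would follow the route used for $|a_2a_4-a_3^2|$ in Theorem~\ref{t2}: invoke the representation Lemma~\ref{lmma1}, set $s=p_1\in[0,2]$, and substitute the formulas for $p_2,p_3$ to obtain
\begin{equation*}
576\,A_4=-31s^3+40s(4-s^2)x+12s(4-s^2)x^2-24(4-s^2)(1-|x|^2)y
\end{equation*}
for some $x,y$ with $|x|,|y|\le1$. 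Bounding $|y|\le1$ and factoring out $24(4-s^2)$ reduces the problem, for $s\in(0,2)$, to estimating $Y(a,b,c)$ from Lemma~\ref{l3} with
\begin{equation*}
a=\frac{-31s^3}{24(4-s^2)},\qquad b=\frac{5s}{3},\qquad c=\frac{s}{2}.
\end{equation*}

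The main obstacle is the optimization in this last step. Since $ac<0$ on all of $(0,2)$, one must work in the $ac<0$ branch of Lemma~\ref{l3}, decide which subcase of $R(a,b,c)$ is active as $s$ varies, and then maximize $\tfrac{24(4-s^2)}{576}\,Y(a,b,c)$ in $s$. I expect the decisive subcase near the right endpoint to be $R=(|a|+|c|)\sqrt{1-b^2/(4ac)}$, where the estimate collapses to $\frac{1}{576}(19s^3+48s)\sqrt{1+\frac{100(4-s^2)}{93s^2}}$; checking that this rises to its supremum $\tfrac{31}{72}$ as $s\to2$, together with the boundary evaluations $s=0$ (giving $\tfrac16$) and $s=2$ (giving $\tfrac{31}{72}$), yields $|A_4|\le\tfrac{31}{72}$. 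Sorting the subcases and verifying monotonicity in $s$ is the part demanding the most care, and is where an honest computation must be carried out rather than sketched.

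Finally, for sharpness of the first two estimates I would exhibit the extremal function $f_1$ of (\ref{sharp1}), whose coefficients are $a_2=\tfrac12$, $a_3=\tfrac{1}{12}$, $a_4=\tfrac{1}{72}$; then $A_2=-a_2=-\tfrac12$ and $A_3=2a_2^2-a_3=\tfrac{5}{12}$, so both bounds are attained. In accordance with the statement I would not assert sharpness for $A_4$; I note only that the same $f_1$ gives $|A_4|=\tfrac{31}{72}$, so whether this bound is in fact best possible is a point worth revisiting.
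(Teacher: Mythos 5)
Your proposal is correct and follows the paper's own proof essentially step for step: identical expressions for $A_2,A_3,A_4$ in terms of the $p_n$, the same use of (\ref{eq24}) and of (\ref{eq26}) with $\mu=\tfrac{4}{3}$, and for $A_4$ the same reduction via Lemma \ref{lmma1} to $\tfrac{4-s^2}{24}\,Y(a,b,c)$, your $(a,b,c)$ being the negatives of the paper's $(\widetilde{A},\widetilde{B},\widetilde{C})$, which leaves $Y$ unchanged. The subcase sorting you defer is exactly what the paper executes in its cases C1--C5: the first $ac<0$ branch applies on $\bigl(0,\tfrac34\bigr)$ with bound $\approx 0.2100$, the branch $-|a|+|b|+|c|$ on $\bigl[\tfrac34,4\sqrt{15/401}\bigr]$ with bound $\approx 0.2126$, and your predicted decisive branch $\left(|a|+|c|\right)\sqrt{1-b^2/(4ac)}$ on $\bigl(4\sqrt{15/401},2\bigr)$, where the resulting function $\frac{\left(19s^2+48\right)\sqrt{400-7s^2}}{576\sqrt{93}}$ increases to $\tfrac{31}{72}$, in agreement with your endpoint evaluations at $s=0$ and $s=2$. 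One point in your proposal actually improves on the paper: since $p_1=2$ forces $p(z)=(1+z)/(1-z)$, i.e.\ $f=f_1$, your evaluation $A_4=-5a_2^3+5a_2a_3-a_4=-\tfrac{45}{72}+\tfrac{15}{72}-\tfrac{1}{72}=-\tfrac{31}{72}$ shows the bound on $|A_4|$ is attained within $\mathcal{S}_G^{\ast}$ (this is precisely the paper's own case B, $\sigma=2$), so the theorem's disclaimer that the bounds are sharp ``except $A_4$'' is unnecessarily weak---all three bounds are in fact sharp, and your closing remark is worth promoting from a side note to a correction of the statement.
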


\begin{proof}
Let $f\in \mathcal{S}^*_G$ and $f^{-1}(w)=w+A_2w^2+A_3w^3+\cdots.$ If
equations (\ref{aa2}), (\ref{aa3}) and (\ref{aa4}) are written in (\ref{eqson}), we obtain 
\begin{align}
A_2&=-\frac{p_1}{4},  \label{eq334} \\
A_3&=\frac{p_1^2}{6}-\frac{p_2}{8},  \label{eq335} \\
A_4&=-\frac{1}{576}\left(48p_3-128p_1p_2+83p_1^3\right).  \label{eq336}
\end{align}
Because of $\left|p_1\right|\leq 2,$ the result for $\left|A_2\right|\leq 
\frac{1}{2}$ is trivial.\newline
We now estimate $\left|A_3\right|$ by using (\ref{eq26}) in equality (\ref%
{eq335}). Thus, we have 
\begin{equation*}
\left|A_3\right|\leq \frac{1}{8}\left|p_2-\frac{4}{3}p_1^2\right|\leq \frac{5%
}{12}.
\end{equation*}
From the relations Lemma \ref{lmma1} and (\ref{eq336}), and by some simple
calculations, we have 
\begin{equation*}
K=-\frac{4-\sigma^2}{24}\left(\frac{31\sigma^3}{24\left(4-\sigma^2\right)}-%
\frac{5\sigma}{3}x- \frac{\sigma}{2}x^2+\left(1-\left|x\right|^2\right)y
\right),
\end{equation*}
where $\sigma=p_1\in \left[0,2\right],$ $\left|x\right|\leq 1$ and $%
\left|y\right|\leq 1.$ We now investigate upper bound of the $\left|K\right|$
according to $\sigma.$ \newline
A. If $\sigma=0,$ then $T=-\frac{1}{6}\left(1-\left|x\right|^2\right)y$ and
so we have $\left|T\right|\leq \frac{1}{6}.$\newline
B. Let $\sigma=2.$ Then $T=-\frac{31}{72}$ and so we have $%
\left|T\right|\leq \frac{31}{72}.$\newline
C. We now assume that $\sigma \in \left(0,2\right). $ Then, we can write 
\begin{equation}  \label{eq337}
\begin{split}
\left|K\right|=&\frac{4-\sigma^2}{24}\left|\frac{31\sigma^3}{%
24\left(4-\sigma^2\right)}-\frac{5\sigma}{3}x- \frac{\sigma}{2}%
x^2+\left(1-\left|x\right|^2\right)y \right| \\
\leq & \frac{4-\sigma^2}{24}\left(\left|\widetilde{A}+\widetilde{B}x+%
\widetilde{C}x^2\right|+ 1-\left|x\right|^2 \right),
\end{split}%
\end{equation}
where 
\begin{equation*}
\widetilde{A}= \frac{31\sigma^3}{24\left(4-\sigma^2\right)},~~~ \widetilde{B}%
= -\frac{5\sigma}{3}\text{ \ and \ } \widetilde{C}= -\frac{\sigma}{2}.
\end{equation*}
For the rest of the proof, we use Lemma \ref{l3}. Then 
\begin{equation*}
\widetilde{A}\widetilde{C}=-\frac{31\sigma^4}{48\left(4-\sigma^2\right)}<0.
\end{equation*}
C1. Note that the inequality 
\begin{equation*}
-4\widetilde{A}\widetilde{C}\left(\widetilde{C}^{-2}-1\right)-\widetilde{B}%
^{2}=\frac{31\sigma^4}{12\left(4-\sigma^2\right)}\left( \frac{4}{\sigma^2}-1
\right)-\frac{25\sigma^2}{9}\leq0
\end{equation*}
which evidently holds for $s\in \left(-\infty,\infty\right).$ Moreover, the
inequality $\left|\widetilde{B}\right|<2\left(1-\left|\widetilde{C}%
\right|\right)$ is equivalent to $\sigma <\frac{3}{4},$ which is true for $%
\sigma\in \left(0,\frac{3}{4}\right).$ Then, by (\ref{eq337}) and Lemma \ref%
{l3}, 
\begin{equation}  \label{eq338}
\begin{split}
\left|K\right|\leq& \frac{4-\sigma^2}{24}\left(1-\left|\widetilde{A}\right|+%
\frac{\widetilde{B}^2}{4\left(1-\left|\widetilde{C}\right|\right)}\right) \\
=&\frac{7\sigma^3+128\sigma^2+288}{1728}\leq \frac{2581}{12288} \approx
0.210042
\end{split}%
\end{equation}
for $\sigma \in \left(0,\frac{3}{4}\right).$\newline
C2. Since 
\begin{equation*}
4\left(1+\left|\widetilde{C}\right|\right)^2=\sigma^2+4\sigma+4,\quad -4%
\widetilde{A}\widetilde{C}\left(\widetilde{C}^{-2}-1\right)=\frac{31\sigma^2%
}{12},
\end{equation*}
for $\sigma \in \left[\frac{3}{4},2\right),$ we see that the inequality 
\begin{equation*}
\frac{25\sigma^2}{9}=\widetilde{B}^{2}<\min\left\{4\left(1+\left|\widetilde{C%
}\right|\right)^2,~ -4\widetilde{A}\widetilde{C}\left(\widetilde{C}%
^{-2}-1\right)\right\}= \frac{31\sigma^2}{12}
\end{equation*}
is equivalent to $\frac{7\sigma^2}{36}<0,$ which is false for $\sigma \in %
\left[\frac{3}{4},2\right).$\newline
C3. Observe that the inequality 
\begin{equation*}
\left|\widetilde{C}\right|\left(\left|\widetilde{B}\right|+4\left|\widetilde{%
A}\right|\right)-\left|\widetilde{A}\widetilde{B}\right|=\frac{%
\sigma^2\left(29\sigma^2-240\right)}{72\left(\sigma^2-4\right)}\leq 0
\end{equation*}
is false for $\sigma \in \left[\frac{3}{4},2\right).$\newline
C4. Note that the inequality 
\begin{equation*}
\left|\widetilde{A}\widetilde{B}\right|-\left|\widetilde{C}%
\right|\left(\left|\widetilde{B}\right|-4\left|\widetilde{A}\right|\right)=%
\frac{\sigma^2\left(401\sigma^2-240\right)}{72\left(4-\sigma^2\right)}\leq 0
\end{equation*}
is true $\sigma \in \left[\frac{3}{4},4\sqrt{\frac{15}{401}}\right].$ Then,
by (\ref{eq337}) and Lemma \ref{l3}, 
\begin{equation}  \label{eq339}
\begin{split}
\left|K\right|\leq& \frac{4-\sigma^2}{24}\left(-\left|\widetilde{A}%
\right|+\left|\widetilde{B}\right|+\left|\widetilde{C}\right|\right) \\
=&\frac{1}{576}\left(208\sigma-83\sigma^3\right)\leq \frac{3968}{1203}\sqrt{%
\frac{5}{1203}} \approx 0.212646
\end{split}%
\end{equation}
for $\sigma \in \left[\frac{3}{4},4\sqrt{\frac{15}{401}}\right].$\newline
C5. It remains to consider the last case in Lemma \ref{l3}, for $\sigma \in
\left(4\sqrt{\frac{15}{401}},2\right).$ Then, by (\ref{eq337}) 
\begin{equation}  \label{eq340}
\begin{split}
\left|K\right|\leq& \frac{4-\sigma^2}{24}\left(\left|\widetilde{A}%
\right|+\left|\widetilde{C}\right|\right)\sqrt{1-\frac{\widetilde{B}^2}{4%
\widetilde{A}\widetilde{C}}} \\
=&\frac{\sqrt{400-7\sigma^2}\left(19\sigma^2+48\right)}{576\sqrt{93}}\leq 
\frac{31}{72} \approx 0.430556
\end{split}%
\end{equation}
is true for $\sigma \in \left(4\sqrt{\frac{15}{401}},2\right).$ In addition,
as can be seen from the figure below, the function $H_1(\sigma)=\frac{\sqrt{%
400-7\sigma^2}\left(19\sigma^2+48\right)}{576\sqrt{93}}$ is increasing in $%
\left(4\sqrt{\frac{15}{401}},2\right)$ and takes its maximum value at the
limit.

\begin{figure}[h]
\begin{center}
\begin{tabular}{l}
\includegraphics[scale=0.7]{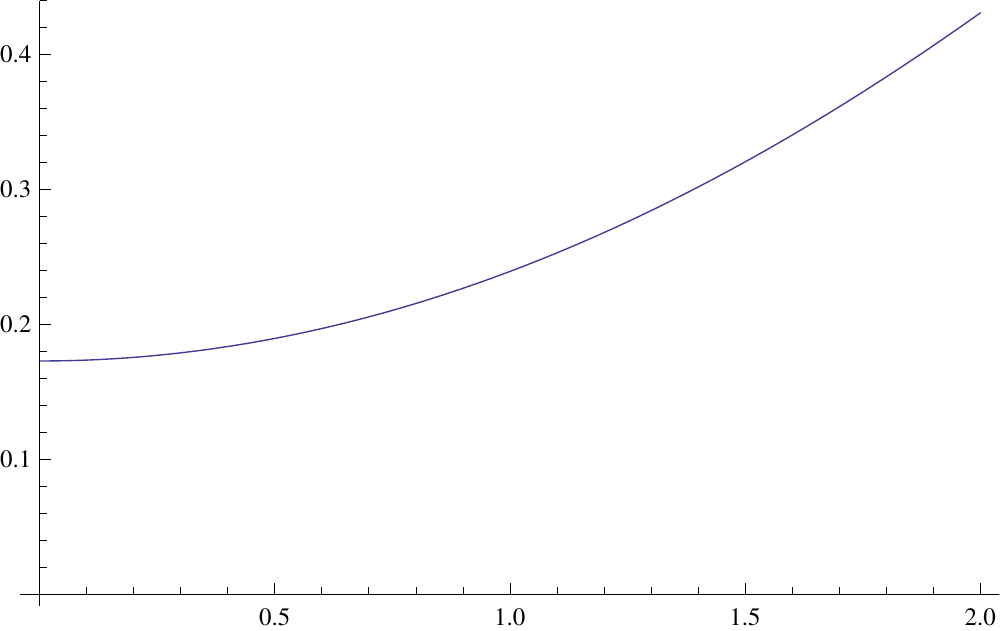}%
\end{tabular}%
\end{center}
\caption{The graph of the function $H_1(\protect\sigma)$ on $%
\left(0,2\right) $}
\label{fig:Figure1}
\end{figure}
From A-C, we have $\left|A_3\right|\leq \frac{31}{72}.$
\end{proof}

\end{document}